\newcommand{\n}{\mathbb{N}}
\newcommand{\z}{\mathbb{Z}}
\newcommand{\real}{\mathbb{R}}
\newcommand{\p}{\mathbb{P}}
\newcommand{\e}{\mathbb{E}}
\DeclareMathOperator*{\argmax}{arg\,max}
\newmdtheoremenv{defi}{Definition}
\newmdtheoremenv{theo}{Theorem}
\newmdtheoremenv{prop}{Proposition}
\newmdtheoremenv{lemm}{Lemma}
\newmdtheoremenv{coro}{Corollary}
\newmdtheoremenv{conj}{Conjecture}
\newmdtheoremenv{claim}{Claim}
\newmdtheoremenv{obs}{Observation}
\newmdtheoremenv{rmk}{Remark}
\newmdtheoremenv{qst}{Question}
\theoremstyle{definition}
\title{$L_1$ and $L_2$ embeddings of the symmetric group}
\author{Cosmas Kravaris}
\begin{document}
\begin{center}
    \maketitle
    \begin{abstract}
    We show that the Cayley graph of the symmetric group $Sym_n$ generated by the cycle $(123...n)$ and the transposition $(12)$ embeds into $L_1$ with bi-Lipschitz distortion $O(1)$.
    This answers a question of Ostrovskii, and along with Kassabov's theorem gives the first example of a sequence of groups which embed bi-Lipschitzly into $L_1$ for one choice of bounded size generating sets, but not for another choice of bounded size generating sets.
    In particular, the Cayley graphs generated by the cycle and the transposition cannot contain coarsely any unbounded sequence of expander graphs.
    Moreover, within the context of the Ribe program, they are a new example of bounded degree Cayley graphs which are test spaces for Rademacher type.
    \end{abstract}
\end{center}

\section{\textbf{Introduction}}

The \textbf{rank} of a finitely generated group $G$, denoted by $rank(G)$, is the smallest size of a generating set of $G$.
Given a group $G$ with a finite generating set $S$, denote by $c_1(G,d_S)$ the bi-Lipschitz distortion of the shortest path metric $d_S$ of the Cayley graph $\Gamma(G,S)$ into $L_1$ (see the background below).

We start with the following observation and fundamental question, shown to us by Naor.
A sequence of finite groups $\{G_n\}_n$ of bounded rank, $\sup_n rank(G_n) < \infty$, tautologically exhibits one of the following three behaviors:
\\\textbf{Never-$L_1$ behavior}: For any sequence of generating sets $\{S_n \subset G_n\}_n$ 
with $\sup_n |S_n| < \infty$,
$$\sup_{n} c_1(G_n,d_{S_n}) = \infty.$$
\textbf{Always-$L_1$ behavior}: For any sequence of generating sets $\{S_n \subset G_n\}_n$ with $\sup_n |S_n| < \infty$,
$$\sup_{n} c_1(G_n,d_{S_n}) < \infty.$$
\textbf{Mixed-$L_1$ behavior}: There exist two sequences of generating sets $\{S_n \subset G_n\}_n$ and $\{S'_n \subset G_n\}_n$ with $\sup_n |S_n| < \infty$ and $\sup_n |S'_n| < \infty$ such that
$$\sup_{n} c_1(G_n,d_{S_n}) < \infty\;\;\;and\;\;\;\sup_{n} c_1(G_n,d_{S'_n}) = \infty.$$
The existence of a sequence of groups with Never-$L_1$ behavior
follows from a theorem of Breuillard and Gamburd \cite{breuillard2010strong},
and the well-known fact that expander graphs do not embed bi-Lipschitzly into $L_1$ \cite{linial1995geometry}.
It was shown to us by Naor that bounded rank abelian groups exhibit Always-$L_1$ behavior (see Proposition \ref{AbelImpliesEuclid} in Section \ref{EuclideanSection}).
See Section \ref{EuclideanSection} for the $L_2$-counterparts of these behaviors.
\\\textbf{The main result of this paper, see Theorem \ref{MainTheorem} below, provides the first example of a sequence of groups with Mixed-$L_1$ behavior.}
\begin{qst}[Naor]\label{QuestionCharacterizeTrichotomy}
  Classify (sequences of) groups $\{G_n\}$ according to the above trichotomy.
  Namely, obtain a useful structural characterization of when they exhibit never, always or mixed $L_1$ behavior.
\end{qst}

In particular, Naor highlighted the following concrete particular case of Question \ref{QuestionCharacterizeTrichotomy}, which could be quite tractable and enlightening to resolve: into which part of the above trichotomy do the lamplighter groups $\{\z/2\z \wr \z/n\z\}_n$ fall?

A landmark result of Kassabov \cite{kassabov2007symmetric} states that there exists a sequence of generating sets $T_n \subset Sym_n$ of the symmetric groups of bounded size, $\sup_n |T_n| < \infty$, such that the Cayley graphs $\{\Gamma(Sym_n, T_n)\}_n$ form a family of expanders.
In light of Kassabov's theorem, Ostrovskii asked the following question \cite{ostr2025personal} which was subsequently posted in \cite{aim_metric_embeddings_participants}.
\begin{qst}[Ostrovskii]\label{OstrovskiiQuestion}
  Do there exist $C \in (0,\infty)$ and $r \in \n$ and generating sets $T_n \subset Sym_n$ each of size $|T_n|\leq r$ such that for each $n$, the symmetric group with word metric $(Sym_n, d_{T_n})$ embeds into $L_1$ with bi-Lipschitz distortion $\leq C$?  
\end{qst}

We give a positive answer to Question \ref{OstrovskiiQuestion}.
Combined with Kassabov's theorem (and the well-known fact that bounded degree expander graphs do not embed bi-Lipschitzly into $L_1$ \cite{linial1995geometry}),
we conclude that the symmetric groups $\{Sym_n\}_{n=1}^\infty$ exhibit Mixed-$L_1$ behavior.

\begin{theo}[Cycle and transposition live in $L_1$]\label{MainTheorem}.\\
    For any $n \in \n$, the symmetric group $Sym_n$ on $\z/n$ endowed with the word metric generated by the transposition $t:= (01)$ and the cycle $c:= (0123...n-1)$ 
    \\embeds into $L_1$ with bi-Lipschitz distortion $O(1)$. Specifically, we have
    $$\sup_{n} c_1(Sym_n,d_{\{t,c\}}) < 1000.$$
\end{theo}
The constant $1000$ is not optimized.

It is well-known that $\Gamma(Sym_n, \{t,c\})$ is not an expander (e.g. see Remark 2 on page 510 in \cite{BABAI1989507} or Section 11.4 in the survey \cite{hoory2006expander}).
Theorem \ref{MainTheorem} implies that $\Gamma(Sym_n, \{t,c\})$ is \textbf{very far} from being an expander in the following strong sense.

\begin{coro}[Coarse opposite to Kassabov's theorem]\label{CoarseAntiKassabov}
    .\\The sequence of Cayley graphs $\{\Gamma(Sym_n,\{t,c\})\}_n$ cannot contain coarsely an unbounded sequence of bounded degree expanders.
    That is, given two increasing functions $\alpha, \beta: [0,\infty) \to [0,\infty)$ with $\lim_{t \to \infty} \alpha(t) = \infty$,
    and a finite graph $\Gamma$ with (normalized) spectral gap $1-\lambda$ and maximum degree $\Delta$,
    \textbf{if} there exists $f: \Gamma \to Sym_n$ with
    $$\alpha(d_\Gamma(u,v)) \leq d_{\{c,t\}}(f(u),f(v)) \leq \beta(d_{\Gamma}(u,v))\;\;\;for\;all\;u,v \in V(\Gamma)$$
    (where $d_{\Gamma}$ denotes the shortest path metric on $\Gamma$ and $d_{\{c,t\}}$ the word metric on $Sym_n$),
    \\\textbf{then} the number of vertices in $\Gamma$ is bounded:
    $$|V(\Gamma)| \leq \Delta^{\alpha^{-1}(C\beta(1)/(1-\lambda))}$$
    where $C>0$ is a universal constant.
\end{coro}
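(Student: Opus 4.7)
The plan is to reduce Corollary \ref{CoarseAntiKassabov} to the classical non-embeddability of expanders into $L_1$, using Theorem \ref{MainTheorem} as the bridge. Any hypothetical coarse embedding $f: V(\Gamma) \to Sym_n$ composes with the bi-Lipschitz embedding $\phi: (Sym_n, d_{\{c,t\}}) \to L_1$ of distortion $\leq 1000$ provided by Theorem \ref{MainTheorem} to yield a map $F := \phi \circ f: V(\Gamma) \to L_1$. After rescaling $\phi$ to be $1$-Lipschitz, the bi-Lipschitz bounds on $f$ translate into
$$\alpha(d_\Gamma(u,v))/1000 \;\leq\; \|F(u)-F(v)\|_1 \;\leq\; \beta(d_\Gamma(u,v)) \qquad \forall u,v \in V(\Gamma),$$
so everything reduces to showing that the existence of such a map forces $|V(\Gamma)|$ to be small.

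Next I would invoke the $L_1$-valued Poincar\'e inequality on the expander $\Gamma$: for some universal $K > 0$,
$$\frac{1}{|V|^2}\sum_{u,v \in V(\Gamma)}\|F(u)-F(v)\|_1 \;\leq\; \frac{K}{1-\lambda}\cdot \mathrm{avg}_{\{u,v\} \in E(\Gamma)}\|F(u)-F(v)\|_1.$$
This is a standard consequence of the cut-cone decomposition of $L_1$ metrics combined with the Cheeger-type comparison of cut sizes against their edge boundary (as in Linial--London--Rabinovich and Matou\v sek). Applying the upper bound above on edges (where $d_\Gamma = 1$) bounds the right-hand side by $K\beta(1)/(1-\lambda)$.

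To lower-bound the left-hand side, I would use the maximum degree: a ball of radius $R$ in $\Gamma$ contains at most $O(\Delta^R)$ vertices, so once $|V(\Gamma)| \gtrsim \Delta^R$ at least half of the ordered pairs $(u,v)$ satisfy $d_\Gamma(u,v) > R$. Combined with the coarse lower bound on $\|F(u)-F(v)\|_1$, this gives
$$\tfrac{1}{|V|^2}\sum_{u,v}\|F(u)-F(v)\|_1 \;\gtrsim\; \alpha(R)/1000.$$
Plugging into the Poincar\'e inequality yields $\alpha(R) \lesssim \beta(1)/(1-\lambda)$, hence $R \leq \alpha^{-1}(C\beta(1)/(1-\lambda))$ for a universal $C$. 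Taking $R$ just below $\log_\Delta|V(\Gamma)|$ and contrapositively solving for $|V(\Gamma)|$ produces the advertised bound.

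The main technical point, rather than a genuine obstacle, is the precise form of the $L_1$ Poincar\'e inequality for the non-regular graph $\Gamma$: the clean version uses $\Delta$ in place of the regular degree, which is handled by a standard lazy-walk normalization against the maximum degree (or by inserting parallel edges to regularize, at the cost of absorbing harmless factors of $\Delta$ into the universal constant $C$). The rest of the argument is constant bookkeeping.
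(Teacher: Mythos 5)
Your proposal is correct and follows exactly the standard argument the paper itself invokes by citing Gromov and Ostrovskii (Theorem 5.7 and the proof of Theorem 4.9 in the reference): compose the coarse map with the $L_1$-embedding from Theorem \ref{MainTheorem}, apply the $L_1$-valued Poincar\'e inequality on the expander via cut-cone decomposition and Cheeger, and compare against the ball-growth bound $|B(u,R)| \le \Delta^R$ to extract the cardinality bound. The remaining loose ends you flag (non-regular normalization, absorbing factors such as the extra $\Delta$ from $\Delta^{R+1}$ into the universal constant $C$) are precisely the constant bookkeeping the paper leaves implicit.
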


The proof that Theorem \ref{MainTheorem} implies Corollary \ref{CoarseAntiKassabov} follows from Gromov \cite{gromov2003random} and it is a standard argument nowadays (see Theorem 5.7 and the proof of Theorem 4.9 in \cite{ostrovskii2013metric}).
Note that the dependence on the max-degree $\Delta$ 
is necessary: $\{\Gamma(Sym_n,\{t,c\})\}_n$ contain the Hamming cubes bi-Lipschitzly which are unbounded degree spectral expanders (see Subsection \ref{SubSectionRibe} and Section \ref{SectionProofOfCorollaries}).

\subsection{Background}

The \textbf{$L_1$ bi-Lipschitz distortion} of a metric space $(\mathcal{M},d)$, denoted by $c_1(\mathcal{M})$, is the infimal $D>1$ for which there exists a map $f: \mathcal{M} \to L_1$ with
$$d(x,y) \leq ||f(x)-f(y)||_1\leq  D\;d(x,y)\;\;\;for\;all\;x,y \in \mathcal{M}.$$
Understanding the smallest possible distortion $D>1$ for embedding a metric space is a central question in the theory of metric embeddings (see for instance \cite{matousek2013lectures, ostrovskii2013metric, naor2012introduction, deza1997geometry, hoory2006expander}).
The question is equivalent to bi-Lipschitz approximation by a measured walls structure which has been fruitfully investigated in the context of geometric group theory 
(see for example Prop. 2.6 in \cite{cornulier2012proper}, \cite{chatterji2025median}, Chapter 6 in \cite{dructu2018geometric} and references within).

For a Banach space $X$, the bi-Lipschitz distortion $c_X(\mathcal{M})$ is defined analogously. When $X=L_2$ we write $c_2(\mathcal{M}):=c_{L_2}(\mathcal{M})$.
$L_2$-distortion will be discussed in Section \ref{EuclideanSection}.
(We remark that for finite metric spaces, whether the target space is $L_1([0,1])$ or $l_1(\n)$ makes no difference. For every $\epsilon>0$, any finite dimensional subspace of one $L_1$-space is $(1+\epsilon)$-isomorphic to some finite dimensional subspace of the other $L_1$-space. The same remark holds for $L_2$.)

\textbf{Notation:} 
For each $n \in \n$ let $Sym_n$ be the symmetric group on $\z/n = \{0,1,2,...,n-1\}$, $t = (01)$ be the transposition of the first two elements and $c = (0123...n-1)$ be the cyclic permutation.
We consider the (left) Cayley graphs of the symmetric groups $\Gamma(Sym_n,\{t,c\})$ with vertices $Sym_n$ and edges $\{(t \pi,\pi), (c\pi,\pi): \pi \in Sym_n\}$ and view $Sym_n$ as a metric space endowed with the shortest path metric (that is, the word metric).
We denote the metric by $d(\cdot, \cdot)$ and the distance to the identity (or word length) by $|\cdot|$.
We will write: $S := \{t,c\}$ for the generating set.
For each $k,l \in \z/n$ we denote by $d_{\z/n}(k,l)$ the distance between $k$ and $l$ on the Cayley graph of $\z/n$ generated by $\{+ 1\}$.
Finally, the product of two permutations is read from right to left, as in function composition.
This means that, for instance, $(12)(23) = (123)$ and $(23)(12) = (132)$.

We use the following asymptotic notation: two sequences $\{a_n\}_n, \{b_n\}_n \subset \real^+$ one has $a_n \lesssim b_n$ if and only if $b_n \gtrsim a_n$ if and only if there exists $0<C<\infty$ such that $a_n \leq C b_n$ for all $n \in \n$. Also, we write $a_n \asymp b_n$ when $a_n \lesssim b_n$ and $b_n \lesssim a_n$ .

\textbf{Remark:} It is well-known that if we do not insist that the generating sets are of bounded size, then it is easy to embed the symmetric groups into $L_1$.
For each $n$, take the generating set of all transpositions.
Map $f: Sym_n \to l_1([n]^2)$ by mapping each permutation $\pi$ to its associated permutation matrix $A_\pi$.
Whenever two permutations differ by a transposition, their difference in the image is exactly $2$.
By the triangle inequality, this gives the upper bound $||A_\pi-A_\tau||_1 \leq 2 d(\pi,\tau)$ for any $\pi, \tau \in Sym_n$.
For the lower bound, observe that $||A_\pi - A_\tau||_1 = ||Id - A_{\tau \pi^{-1}}||_1$ so it suffices to show $||Id - A_\pi|| \gtrsim d(1,\pi)$.
Observe that by our choice of generating set, $d(1,\pi) = \#\{k \in [n]: \pi(k) \neq k\} - \#\{cycles\; of \;\pi \}$ and also $||Id - A_\pi||_1 = 2  \#\{k \in [n]: \pi(k) \neq k\}$; so the lower bound also follows.

\subsection{New example of bounded degree Cayley test spaces for Rademacher type}\label{SubSectionRibe}
.\\In this subsection and in Section \ref{SectionProofOfCorollaries} we discuss an application of Theorem \ref{MainTheorem} to the geometry of Banach spaces.
Those who are not interested in Banach spaces can skip these parts.
.\\A central aspect of the Ribe program \cite{naor2012introduction, ball2012ribe} aims to provide metric characterizations of local properties of Banach spaces 
(i.e. properties that depend only on the linear structure of the finite dimensional subspaces).
An important early result is a metric characterization of Rademacher type \footnote{Rademacher type is a fundamental invariant of the local geometry of a Banach spcae (see \cite{maurey2003type}). We do not need to recall it here in this paper, because we will not use it.} through test spaces:
a Banach space $X$ has trivial Rademacher type if and only if $X$ contains the Hamming cubes with uniformly bounded bi-Lipschitz distortion, i.e. 
$\sup_n c_X((\{0,1\}^n,||\cdot||_1)) < \infty$.
The "only if" direction follows from Pisier \cite{pisier1973espaces}.
For the "if" direction note the folklore result that for every $0<\epsilon<1$ and every finite subset $F \subset L_1$, $F$ embeds into $\{0,1\}^n$ with distortion $1+\epsilon$ for some $n\in \n$ (this follows from a standard approximation argument; see for instance the proof of Lemma 5.4 in \cite{mendel2014expanders}).
By Ribe's theorem \cite{ribe1976uniformly} (see also \cite{bourgain2006remarks} for quantitative bounds), taking an $\epsilon$-net of the unit ball of any finite dimensional subspace of $L_1$ we see that $L_1$ is crudely finitely representable in $X$, hence $X$ has trivial Rademacher type.
(Almost sharp quantitative bounds for this metric characterization of type follow from \cite{bourgain1986type} and truly sharp bounds follow from \cite{ivanisvili2020rademacher}.)

A sequence of metric spaces $((M_n,d_n))_n$ is a \textbf{family of test spaces} for a Banach space $Z$ whenever for all Banach spaces $X$ we have that $Z$ is finitely representable in $X$ if and only if $\sup_n c_X((M_n,d_n)) < \infty$ \cite{ostrovskii2013metric}.
Ostrovskii \cite{ostrovskii2016metric, ostrovskii2011different} showed that for any Banach space $Z$ there exists a family of $3$-regular graphs whose shortest-path metrics form a family of test spaces for $Z$.
However, these test spaces are not vertex-transitive, even for $L_1$.

A theorem of Naor and Peres \cite{naor2008embeddings} states that the lamplighter groups $\z/n \wr \z/n$ with the standard generators (either move the pointer by a unit or edit the current lamp by a unit) embed into $L_1$ with uniformly bounded bi-Lipschitz distortion.
On the other hand, Arzhantseva, Guba and Sapir \cite{arzhantseva2006metrics} showed that every Hamming cube $\{0,1\}^n$ embeds into $\z/m \wr \z/m$ for some $m$ with uniformly bounded bi-Lipschitz distortion.
It follows that the sequence of $4$-regular Cayley graphs of $(\z/n \wr \z/n)_n$ form test spaces for Rademacher type.
As an application of Theorem \ref{MainTheorem}, we have a new example of bounded degree Cayley test spaces of Rademacher type which are $3$-regular and are Cayley graphs of the symmetric groups.

\begin{prop}[Bounded degree Cayley test spaces for Rademacher type].\\
    A Banach space $X$ has trivial Rademacher type if and only if 
    $$\sup_n c_X(Sym_n,d_{\{t,c\}}) < \infty.$$
\end{prop}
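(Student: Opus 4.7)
The plan is to route both directions through Pisier's metric characterization of trivial Rademacher type recalled in Subsection~\ref{SubSectionRibe}: a Banach space $X$ has trivial Rademacher type if and only if the Hamming cubes $(\{0,1\}^k, \|\cdot\|_1)$ embed into $X$ with uniformly bounded bi-Lipschitz distortion. Both directions become short once this bridge is in hand.

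For the forward direction, suppose $X$ has trivial Rademacher type. By Pisier's theorem, $L_1$ is crudely finitely representable in $X$: for every $\epsilon>0$ and every finite-dimensional $F \subset L_1$ there exists a $(1+\epsilon)$-isomorphic linear copy of $F$ inside $X$. Theorem~\ref{MainTheorem} provides an embedding $(Sym_n, d_{\{t,c\}}) \hookrightarrow L_1$ of distortion at most $1000$; since $Sym_n$ is finite the image lies in some finite-dimensional subspace, and composing with the $(1+\epsilon)$-isomorphism yields a uniform embedding of $Sym_n$ into $X$ of distortion at most $1000(1+\epsilon)$. Hence $\sup_n c_X(Sym_n, d_{\{t,c\}}) < \infty$.

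For the backward direction, the key auxiliary statement (announced in the remark following Corollary~\ref{CoarseAntiKassabov} and proved in Section~\ref{SectionProofOfCorollaries}) is that the Hamming cubes embed into $(Sym_{n(k)}, d_{\{t,c\}})$ with uniformly bounded bi-Lipschitz distortion for a suitable $n(k)$. Granting this, composition with the hypothesized uniform embedding $Sym_n \hookrightarrow X$ produces uniform embeddings $\{0,1\}^k \hookrightarrow X$, and Pisier's characterization invoked in the first paragraph forces $X$ to have trivial Rademacher type.

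The main obstacle is thus constructing the Hamming-cube embedding into $(Sym_{n(k)}, d_{\{t,c\}})$. A natural strategy is to pick a commuting family of $k$ involutions $\sigma_1, \ldots, \sigma_k$ with well-separated supports along $\z/n$ and consider the abelian-subgroup map $\epsilon \mapsto \prod_i \sigma_i^{\epsilon_i}$. The upper Lipschitz bound can be arranged by realizing any target product through a single traversal of the cycle $c$ interleaved with $t$-swaps at marked positions, giving word length comparable to the Hamming weight of $\epsilon$; the lower bound is the delicate step, requiring a potential-type or coarse-geometric argument on the word metric to show that distinct choices of $\epsilon$ remain proportionally separated. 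An alternative, perhaps cleaner route is to combine the Arzhantseva--Guba--Sapir embedding $\{0,1\}^k \hookrightarrow \z/m \wr \z/m$ with a bi-Lipschitz embedding of the lamplighter group $\z/m \wr \z/m$ into some $(Sym_{n(m)}, d_{\{t,c\}})$; this is natural in the present setting because the pair $\{t,c\}$ closely mirrors the lamplighter generators, with $c$ acting as a pointer on $\z/n$ and $t$ performing a local edit.
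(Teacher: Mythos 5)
Your overall architecture matches the paper's exactly: the forward direction is Pisier plus Theorem~\ref{MainTheorem}, and the backward direction reduces to showing the Hamming cubes embed uniformly into $(Sym_{n(k)}, d_{\{t,c\}})$ and then invoking the metric characterization of type. You even correctly guess the construction used in Section~\ref{SectionProofOfCorollaries}: the map $\epsilon \mapsto \prod_i \sigma_i^{\epsilon_i}$ for a family of commuting transpositions with supports spread along the cycle (the paper uses $t_i = (i-1,\ n+i-1)$ inside $Sym_{4n^2}$).

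The genuine gap is the lower bound for this embedding, which you flag as ``the delicate step'' but leave open, floating the alternatives of a potential-function argument or a detour through a lamplighter embedding. Neither of those is what the paper does, and the lamplighter detour would itself require a nontrivial bi-Lipschitz embedding of $\mathbb{Z}/m \wr \mathbb{Z}/m$ into some $(Sym_n, d_{\{t,c\}})$ that is not established anywhere in the paper. The tool you are missing is already in hand: the word-metric lower bound from the Yadin-type lemma of Section~\ref{SectionYadinFormula} gives directly
$$d(f_\epsilon,f_\delta) \geq \tfrac{1}{3} \min_{l \in \mathbb{Z}/4n^2}\sum_{k} d_{\mathbb{Z}/4n^2}(f_\epsilon(k)-l, f_\delta(k)),$$
and since all but $2n$ of the $4n^2$ indices $k$ are fixed by both $f_\epsilon$ and $f_\delta$, the term $d_{\mathbb{Z}/4n^2}(0,l)$ appears with multiplicity at least $4n^2-2n$, forcing the minimizer to be $l=0$; at $l=0$ the sum equals $2n\,\|\epsilon-\delta\|_1$, giving the desired lower bound with explicit constants. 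This is the one concrete step that turns your sketch into a proof.

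One small attribution point: the implication ``Hamming cubes embed uniformly into $X$ $\Rightarrow$ $X$ has trivial Rademacher type,'' which you use in the backward direction, is not Pisier's theorem. Pisier gives the converse implication. The direction you need is the folklore fact that finite subsets of $L_1$ embed into Hamming cubes with distortion $1+\epsilon$, combined with Ribe's theorem, as recalled in Subsection~\ref{SubSectionRibe}. The logic is unaffected, but the citation is.
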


\subsection{$L_2$-embeddings of Cayley graphs into Hilbert space}
.\\A conjecture of Cornulier-Tessera-Valette \cite{de2008isometric} states that a finitely generated group whose word metric embeds bi-Lipschitzly into Hilbert space must have an abelian subgroup of finite index.
In Section \ref{EuclideanSection}, we state a version of this conjecture due to Naor about finite groups, and prove that a positive answer to the conjecture of Cornulier-Tessera-Valette implies a positive answer to the conjecture of Naor.
One should view Section \ref{EuclideanSection} as the Euclidean counterpart to Question \ref{QuestionCharacterizeTrichotomy}.
This section also contains several results and conjectures due to Naor which appear for the first time with his permission.

\subsection{Overview of the proof of Theorem 1 and paper organization}
.\\In Section \ref{SectionYadinFormula} we begin with an explicit formula for the word metric of $(Sym_n,\{c,t\})$ up to constant factors.
The formula is inspired by the word metric formula of Yadin for the lampshuffler groups \cite{yadin2009rate}.
The \textit{lampshuffler group} $Sym_{00}(\z) \ltimes \z$ consists of all pairs $(\pi,x)$ where $\pi$ is a finite support permutation on $\z$ and $x \in \z$ is a pointer.
Roughly speaking, this is a variation of the lamplighter on $\z$, but the generator moves are: just moving the pointer and moving the pointer while transposing the two elements along the movement.
In fact, it is not hard to show that the lampshuffler group $Sym_{00}(\z) \ltimes \z$ embeds bi-Lipschitzly into $L_1$ (see \cite{kravaris2026forthcoming}).
In the symmetric group $Sym_n$, the cycle $c$ plays the role of the "pointer" and $t$ takes the role of the transposition generator
(though no knowledge of lamplighter nor lampshufflers is required for the proof).

The \textbf{key difficulty} in analyzing the word metric on $Sym_n$ as opposed to $Sym_{00}(\z) \ltimes \z$ (or even $Sym_{00}(\z/n) \ltimes \z/n$) is that \underline{\textbf{the pointer (or frame of reference) is ambiguous}}.
For example, whenever $\pi = c^2 t c t c^{-4}$ then it is obvious that the "pointer" should be $-1$, but for a more complicated permutation this is not clear.
The first step of the proof is to show that the distance between two permutations $\pi$ and $\tau$ is given by:
$$|\tau \pi^{-1}| \asymp \min_{l \in \z/n}\left(\sum_{k \in \z/n} d_{\z/n}(\pi(k) - l,\tau(k)) + diam_{\z/n}(\{0,l\}\cup\{p:\pi^{-1}(p) \neq \tau^{-1}(p-l)\})\right)$$
where $d_{\z/n}$ denotes the distance on the $n$-cycle $\z/n$.
Observe that we have to take the minimum over all potential "pointer positions" $l \in \n$.

\textbf{The presence of this minimum makes the metric difficult to analyze}.

In Section \ref{SectionSplitingInfimum} we split the minimum of the sum of two terms into the sum of two minimums:
$$|\tau \pi^{-1}| \asymp \min_{l \in \z/n}\sum_{k \in \z/n} d_{\z/n}(\pi(k) - l,\tau(k)) + \min_{l \in \z/n} diam_{\z/n}(\{0,l\}\cup\{p:\pi^{-1}(p) \neq \tau^{-1}(p-l)\})=: T_1 + T_2$$
This is achieved by high-low casework.
Whenever $T_1 < n/3$, for some $l$ there are at most $n/3$ nonzero terms in the sum, so the "potential pointer" is obvious, and the same value $l$ will be the optimal one in all three minimums.
Whenever $T_1 > n/3$, the first term dominates.
This is because we always have $T_2 \leq n/2$ (since it is a diameter) and hence: $T_1 \leq T_2 \leq (1 + 3/2)T_2$.
The rest of the proof deals with embedding each term separately.

The first term, 
$\min_{l \in \z/n}\sum_{k \in \z/n} d_{\z/n}(\pi(k) - l,\tau(k))$,
is a metric on $Sym_n$ and can be viewed as a subset of the abelian group $[n-1]^{\z/n}$ (where addition is pointwise addition of functions $f: [n-1] \to \z/n$) with word metric given by the generating set $\{(1,0,...,0), (0,1,0,...,),..., (0,...,0,1), (1,1,...,1)\}$.
We follow the embedding method of Austin-Naor-Valette \cite{austin2010euclidean} (see also section 4 in Naor-Peres \cite{naor2008embeddings}),
and write the embedding into $L_1$ using the representation theory of abelian groups (that is, exponential sums).
We interpret 
$$\min_{l \in \z/n}\sum_{k \in \z/n} d_{\z/n}(\pi(k) - l,\tau(k)) = \min_{l \in \z/n}\sum_{k \in \z/n} d_{\z/n}(\pi(k) - \tau(k),l)$$
as the minimum sum of distances of the cloud of points $\{\pi(k)-\tau(k)\}$ to a "median" point $l \in \z/n$.
It is well-known (e.g. see \cite{rabinovich2008average}) that
this quantity is, up to a factor of $2$, the average distance of the cloud of points.
Based on this observation, we construct the embedding via exponential sums.
(Sidenote: similar to how \cite{austin2010euclidean} and \cite{naor2008embeddings} proceed, the embedding was found by reverse-engineering: first searching through all possible $L_2$-representations of this abelian group and pointing out which representations will not work.)

The second term, 
$\min_{l \in \z/n} diam_{\z/n}(\{0,l\}\cup\{p:\pi^{-1}(p) \neq \tau^{-1}(p-l)\})$
is a variation of the lamplighter metric, but in which the lamplighter has "forgotten his position".
It was shown by Naor-Peres \cite{naor2008embeddings} that the lamplighter metric embeds into $L_1$. (Later Ostrovskii-Randrianantoanina \cite{ostrovskii2019characterization} gave a different embedding which works more generally into any non-superreflexive Banach space.)
Here we modify the second embedding given in Naor-Peres by identifying/collapsing certain specific coordinates of $l_1(\n)$.
Whenever the first term is small, e.g. $T_1<n/3$, the optimal "pointer" $l$ is obvious, and the coordinate identifications do not "influence" the lower bound of the embedding.

In Section \ref{SectionFinishingTheProof} we combine the estimates for each of the two terms and prove Theorem \ref{MainTheorem}.

In Section \ref{EuclideanSection} we discuss $L_2$-embeddings of Cayley graphs into Hilbert space.

Finally, in Section \ref{SectionProofOfCorollaries} we prove the test space characterization for $L_1$.
The only missing step is showing that the Hamming cube embeds into the Cayley graph of a cycle and a transposition.
The embedding is similar to that of Arzhantseva, Guba and Sapir \cite{arzhantseva2006metrics} for $\z/n \wr \z/n$,
and the analysis uses the word metric formula in Section \ref{SectionYadinFormula}.

\section{\textbf{The word metric of cycle and transposition}}\label{SectionYadinFormula}

We begin by slowly examining the word length of various types of permutations.
    \\\textbf{Adjacent transpositions:} Via conjugating by cyclic permutations, we can obtain any transposition between adjacent numbers:
    $$c^{k} t c^{-k} = (k(k+1))\;\;\;for\;all\;k \in \z/n.$$
    \textbf{General transpositions:} Any transposition between non-adjacent numbers can be obtained by applying transpositions between adjacent numbers.
    For example, $(13)=(12)(23)(12)$.
    More generally, for any $k \in \z/n$, $(k(k+2)) = (k(k+1))((k+1)(k+2))(k(k+1)).$
    Substituting the expression of $(k(k+1))$ in terms of our generators, we observe \textit{cancellation} between the conjugation exponents:
    $$(k(k+2)) = c^{k} t c^{-k} c^{k+1} t c^{-k-1} c^{k} t c^{-k} = c^{k} t c t c^{-1} t c^{-k}.$$
    We now want to express a general transposition as the product of generators. For every $2 \leq l \leq \lfloor n/2\rfloor + 1$ we have:
    $$(0l)=(01)(12)...((l-2)(l-1))((l-1)l)((l-2)(l-1))...(12)(01) = (tc)^{l-1} t (tc)^{-(l-1)}$$
    while for $\lfloor n/2\rfloor + 1 \leq l \leq n-1$ we have a shorter expression since $(0(n-1)) = c^{-1} t c =: t'$
    $$(0l) = (t'c^{-1})^{l-1} t' (t'c^{-1})^{-(l-1)}.$$
    To obtain a general transposition $(k(k+l))$ we simply conjugate the transposition $(0l)$.
    We have the word length estimates:
    $$|(0l)|_S \leq 4 d_{\z/n}(0,l)\;\;\;and\;\;\;|(k(k+l))|_S \leq 4 d_{\z/n}(0,l) + 2 d_{\z/n}(0,k).$$
    \textbf{Cyclic permutations:} 
    Take a cyclic permutation $(k_1 k_2 ... k_m)$ where $m \in [n]$ and $k_1,...,k_m \in \z/n$ are distinct.
    We write the cyclic permutation as the product of transpositions and observe that there is \textit{cancellation} between the conjugation exponents:
    $$(k_1...k_m) = (k_1 k_2) (k_2 k_3) ... (k_{m-2} k_{m-1}) (k_{m-1} k_m)$$
    $$= c^{k_1} (0 (k_2 - k_1))c^{-k_1} c^{k_2} (0 (k_3 - k_2))c^{-k_2} ... c^{k_{m-1}} (0 (k_m - k_{m-1}))c^{-k_{m-1}}
    = c^{k_1} \left(\prod_{i=1}^{m-1} (0 (k_{i+1} - k_i))c^{k_{i+1}-k_i}\right) c^{-k_m}.$$
    By the triangle inequality we arrive at the estimate:
    $$|(k_1 ... k_m)|_S \leq d_{\z/n}(0,k_1) + d_{\z/n}(0,k_m) + 5 \sum_{i=1}^{m-1} d_{\z/n}(k_i,k_{i+1}) \leq 2d_{\z/n}(0,k_1) + 6\sum_{i=1}^{m-1} d_{\z/n}(k_i,k_{i+1})\;\;\;\;(\star)$$
The following estimate on the word length of a permutation 
is inspired by Propositions 1.3 and 1.4 of Yadin in \cite{yadin2009rate} about lampshuffler groups.

\begin{lemm}
    For each permutation $\pi \in Sym_n$, the word length is given by
    $$|\pi|_{S} \asymp \min_{l \in \z/n}\left(\sum_{k \in \z/n} d_{\z/n}(k,\pi(k)+l) + diam_{\z/n}(\{0,l\}\cup\{p:\pi(p) \neq p-l\})\right).$$
    To be more precise, we have:
    $$\dfrac{1}{3} \min_{l \in \z/n}\left(\sum_{k \in \z/n} d_{\z/n}(k,\pi(k)+l) + diam_{\z/n}(\{0,l\}\cup\{p:\pi(p) \neq p-l\})\right)$$
    $$\leq |\pi|_{S} \leq \min_{l \in \z/n}\left(6\sum_{k \in \z/n} d_{\z/n}(k,\pi(k)+l) + 2 diam_{\z/n}(\{0,l\}\cup\{p:\pi(p) \neq p-l\})\right).$$
\end{lemm}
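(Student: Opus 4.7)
For each $l \in \z/n$, I set $\sigma := c^l \pi$, so that $\{p : \pi(p) \neq p - l\} = \text{supp}(\sigma)$ and $\sum_k d_{\z/n}(k, \pi(k) + l) = \sum_k d_{\z/n}(k, \sigma(k))$. I decompose $\sigma$ into its disjoint cycles $\sigma_1 \cdots \sigma_r$ and, by the cycle word-length estimate proved earlier in this section, factor each $\sigma_i = (k_1^{(i)} \cdots k_{m_i}^{(i)})$ as $c^{k_1^{(i)}} Y_i c^{-k_{m_i}^{(i)}}$ with $|Y_i|_S \leq 5 \sum_j d_{\z/n}(k_j^{(i)}, k_{j+1}^{(i)})$. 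Concatenating these with the prefactor $c^{-l}$, the resulting word for $\pi$ has its ``exterior'' conjugation cost equal to the length of a path in $\z/n$ from $l$ to $0$ that visits each ordered pair $(k_1^{(i)}, k_{m_i}^{(i)})$. I then order the cycles along the circle $\z/n$ and choose each cycle's entry/exit indices so that this path is near-optimal; using the elementary fact that a subset of $\z/n$ of cyclic diameter $D$ fits in an arc of length $2D$, the exterior cost is bounded by a constant multiple of $\text{diam}_{\z/n}(\{0, l\} \cup \text{supp}(\sigma))$, while the interior cost sums to a constant multiple of $\sum_k d_{\z/n}(k, \sigma(k))$. Careful bookkeeping matches the stated constants $6$ and $2$.

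\textbf{Lower bound.} For a word $\pi = h_1 h_2 \cdots h_N$ of length $N = |\pi|_S$ with $h_i \in \{t, c, c^{-1}\}$, I let $l \in \z/n$ be its total signed $c$-exponent and (after possibly adjusting the sign of $l$ to match the statement's convention) let $\sigma$ denote the permutation with $\text{supp}(\sigma) = \{p : \pi(p) \neq p - l\}$. Commuting every $c^{\pm 1}$-letter past the $t$'s via $c^a t c^{-a} = (a, a+1)$ rewrites $\sigma$ as a product of $M$ adjacent transpositions $(p_1, p_1+1) \cdots (p_M, p_M+1)$, where $M$ is the number of $t$'s in the word and each $p_j \in \z/n$ is the signed $c$-exponent accumulated on one side of the $j$-th $t$. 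The pointer $p(i) := \sum_{i' \leq i} \epsilon_{i'} \in \z/n$ (with $\epsilon_i = 0, \pm 1$ according to $h_i$) starts at $0$, ends at $l$, and visits each $p_j$; hence the number of $c^{\pm 1}$-letters is at least $\text{diam}_{\z/n}(\{0, l\} \cup \{p_j\}_j)$. Since each adjacent transposition alters $\sum_k d_{\z/n}(k, \sigma(k))$ by at most $2$, we also have $M \geq \tfrac12 \sum_k d_{\z/n}(k, \sigma(k))$. Finally, $\text{supp}(\sigma) \subseteq \bigcup_j \{p_j, p_j+1\}$ gives $\text{diam}_{\z/n}(\{0, l\} \cup \text{supp}(\sigma)) \leq \text{diam}_{\z/n}(\{0, l\} \cup \{p_j\}) + 2$. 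Summing the two lower bounds on $N$ and rearranging (handling the small-diameter cases directly) yields $N \geq \tfrac13 \bigl(\sum_k d_{\z/n}(k, \sigma(k)) + \text{diam}_{\z/n}(\{0, l\} \cup \text{supp}(\sigma))\bigr)$ for this specific $l$, and hence for the minimum over $l$.

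\textbf{Main obstacle.} The delicate step is the TSP-style construction in the upper bound: one must order the disjoint cycles of $\sigma$ and select each cycle's entry/exit points so that the concatenated exterior conjugation exponents realize a tour of length $O(\text{diam}_{\z/n}(\{0, l\} \cup \text{supp}(\sigma)))$. The cyclic geometry complicates matters compared to the linear analogue: a diameter-$D$ subset of $\z/n$ need not lie in an arc of length $D$, only in one of length $2D$, and this is the geometric origin of the constant $2$ in front of the diameter term in the statement.
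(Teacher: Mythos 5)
Your upper bound follows essentially the same construction as the paper: decompose $c^l\pi$ into disjoint cycles, express each cycle via the word-length estimate for cyclic permutations proved earlier in the section, and bound the ``exterior'' conjugation cost by ordering the cycles along an arc of $\z/n$ that contains $\{0,l\}\cup\mathrm{supp}(c^l\pi)$ (the paper achieves the same end by relabeling so that the path $0\mapsto l_1\mapsto\cdots\mapsto l_s$ through cycle representatives has length at most twice the diameter). Your lower bound, however, is a genuinely different argument. The paper proves a Lipschitz-type estimate: it shows that the right-hand side expression changes by at most $1$ when $\pi$ is multiplied by $c^{\pm1}$ and by at most $3$ when multiplied by $t$, then notes the right-hand side vanishes at the identity, giving the factor $\tfrac13$ after walking a geodesic from $\pi$ to $1$. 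You instead fix a geodesic word $h_1\cdots h_N$ for $\pi$, choose $l$ to be its total signed $c$-exponent, push the $c^{\pm1}$ letters through the $t$'s using $c^atc^{-a}=(a,a{+}1)$, and count: the pointer trajectory forces the number of $c$-letters to be at least the cyclic diameter of $\{0,l\}\cup\{p_j\}$, while the fact that an adjacent transposition moves $\sum_k d_{\z/n}(k,\sigma(k))$ by at most $2$ forces the number of $t$-letters to be at least half the displacement sum. Both routes yield the same constant $\tfrac13$. What your variant buys is an explicit, word-level interpretation of the optimal shift $l$ as the net $c$-exponent of a geodesic (useful intuition for the ``ambiguous pointer'' discussed in the introduction); what the paper's variant buys is brevity, since one never needs to commute letters or separately account for the $\pm 2$ slack between $\{p_j\}$ and $\mathrm{supp}(\sigma)$. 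Your write-up leaves the constant-matching (``careful bookkeeping'' for $6$ and $2$, ``handling the small-diameter cases'') unverified, and the TSP-ordering of cycle entry/exit points deserves a line or two of justification, but the argument is sound as outlined.
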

\begin{proof}
    \textbf{Upper bound (via construction)}
    We will show the upper bound for $l=0$.
    For any $l \in \z/n$, the bound follows by applying the formula to $c^l \pi$.
    Write $\pi$ as the product of disjoint cyclic permutations, say 
    $$\pi = \prod_{j=1}^s c^{l_j} \gamma_j c^{-l_j}$$
    where for each $j=1,...,s$, $l_j \in \z/n$ and $\gamma_j = (0 k^{(j)}_2 k^{(j)}_3...k^{(j)}_{m_j})$ for some $m_j \in \n$ and $k^{(j)}_2,...,k^{(j)}_{m_j} \in \z/n$ all distinct and different from $0$.
    After relabeling the indices, we may assume that the path $0 \mapsto l_1 \mapsto l_2 \mapsto ... \mapsto l_s$ is a path of length at most twice the diameter:
    $$d_{\z/n}(0,l_1) + \sum_{j=2}^{s} d_{\z/n}(l_{j-1},l_j) \leq 2 diam_{\z/n}(\{0\}\cup\{k:\pi(k+l) \neq k\}).$$
    (Why twice? It could be the case that $0$ sits at the middle of the cloud of points $\{l_1,...,l_s\}$.)
    \\We now write:
    $$\pi = \prod_{j=1}^{s} c^{l_j} \gamma_j c^{-l_j} = c^{l_1} \gamma_1 \prod_{j=2}^{s} c^{-l_{j-1}+l_{j}} \gamma_j$$
    and the upper bound follows from the triangle inequality and the estimate on the word length of a cycle (i.e. the inequality $(\star)$).
    \\\textbf{Lower bound (via submultiplicative quantity)}
    We show that the term
    $$T := \min_{l \in \z/n }\left(d_{\z/n}(0,l)+\sum_{k \in \z/n} d_{\z/n}(k,\pi(k+l)) + diam_{\z/n}(\{0\}\cup\{k:\pi(k+l) \neq k\})\right)$$
    does not change much when we apply a generator.
    \\When we multiply $\pi$ by the cyclic permutation, the $T$ changes by at most 1 unit.
    \\When we multiply $\pi$ by a transposition, the $T$ can change by at most 3 units.
    \\For the identity permutation, $T =0$. Moving along the shortest path from $\pi$ to $1$ in the Cayley graph $\Gamma(Sym_n,\{t,c\})$, the $T$ drops to zero, and hence the number of steps to reach the identity is at least
    $$|\pi|_S \geq \dfrac{T}{3} = \dfrac{1}{3} \min_{l \in \z/n }\left(d_{\z/n}(0,l)+\sum_{k \in \z/n} d_{\z/n}(k,\pi(k+l)) + diam_{\z/n}(\{0\}\cup\{k:\pi(k+l) \neq k\})\right),$$
    which gives us the lower bound.
\end{proof}

Applying the formula to the permutation $\tau \pi^{-1}$ we get:
$$|\tau \pi^{-1}| \asymp \min_{l \in \z/n}\left(\sum_{k \in \z/n} d_{\z/n}(k,\tau\pi^{-1}(k)+l) + diam_{\z/n}(\{0,l\}\cup\{p:\tau\pi^{-1}(p) \neq p-l\})\right)$$
$$ = \min_{l \in \z/n}\left(\sum_{k \in \z/n} d_{\z/n}(\pi(k) - l,\tau(k)) + diam_{\z/n}(\{0,l\}\cup\{p:\pi^{-1}(p) \neq \tau^{-1}(p-l)\})\right)$$

\section{\textbf{Splitting the minimum into two terms}}\label{SectionSplitingInfimum}

\begin{lemm}
    For any $\pi, \tau \in Sym_n$, we have
    $$\min_{l \in \z/n}\left(\sum_{k \in \z/n} d_{\z/n}(\pi(k) - l,\tau(k)) + diam_{\z/n}(\{0,l\}\cup\{p:\pi^{-1}(p) \neq \tau^{-1}(p-l)\})\right)$$
    $$\leq 2 \min_{l \in \z/n}\sum_{k \in \z/n} d_{\z/n}(\pi(k) - l,\tau(k)) + \min_{l \in \z/n}diam_{\z/n}(\{0,l\}\cup\{p:\pi^{-1}(p) \neq \tau^{-1}(p-l)\})$$
\end{lemm}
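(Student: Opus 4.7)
The plan is to bound LHS $= \min_l(A(l) + B(l))$ by plugging in a single well-chosen pointer that is good for both summands, where $A(l) := \sum_k d_{\z/n}(\pi(k) - l, \tau(k))$ and $B(l) := diam_{\z/n}(\{0,l\}\cup\{p:\pi^{-1}(p)\neq\tau^{-1}(p-l)\})$. First I would reparametrize via $\sigma := \tau\pi^{-1}$ and $b_p := \sigma(p) - p$; substituting $p = \pi(k)$ rewrites $A(l) = \sum_p d_{\z/n}(-l, b_p)$ and rewrites $\{p:\pi^{-1}(p)\neq\tau^{-1}(p-l)\}$ as $\{p : b_p \neq -l\}$. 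Let $l_1$ be a minimizer of $A$ so that $T_1 = A(l_1)$, and write $N_j := |\{p : b_p = j\}|$. Since every nonzero term of $A(l_1)$ contributes at least one, $T_1 \geq n - N_{-l_1}$, hence $N_{-l_1} \geq n - T_1$. The plan is to estimate $A(l_1) + B(l_1)$ in two regimes based on the size of $T_1$ relative to $\lfloor n/2 \rfloor$.

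In the easy regime $T_1 \geq \lfloor n/2 \rfloor$, every diameter in $\z/n$ is at most $\lfloor n/2 \rfloor \leq T_1$, so $B(l_1) \leq T_1$ and LHS $\leq A(l_1) + B(l_1) \leq 2T_1 \leq 2T_1 + T_2$, as required.

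The substantive regime is $T_1 < \lfloor n/2 \rfloor$, where $N_{-l_1} > n/2$. The only real obstacle is the pigeonhole fact that any subset $S \subseteq \z/n$ with $|S| > n/2$ realizes the maximum possible diameter $\lfloor n/2 \rfloor$. I would verify this in two short cases: for $n$ even, partition $\z/n$ into its $n/2$ antipodal pairs and apply pigeonhole to force a full-distance pair inside $S$; for $n$ odd, observe that the graph on $\z/n$ whose edges join points at distance $\lfloor n/2 \rfloor = (n-1)/2$ is a single $n$-cycle (since $\gcd((n-1)/2, n) = 1$), whose independence number is $\lfloor n/2 \rfloor$, so any $S$ with $|S| \geq \lfloor n/2 \rfloor + 1$ must contain an edge of this graph.

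Applied to $\{p : b_p = -l_1\}$, whose size exceeds $n/2$, this yields $diam_{\z/n}(\{p : b_p = -l_1\}) = \lfloor n/2 \rfloor$. For every $l \neq l_1$ the set $\{p : b_p \neq -l\}$ contains $\{p : b_p = -l_1\}$, so $B(l) \geq \lfloor n/2 \rfloor \geq B(l_1)$. Hence $l_1$ is also a minimizer of $B$, i.e., $B(l_1) = T_2$, and LHS $\leq A(l_1) + B(l_1) = T_1 + T_2 \leq 2T_1 + T_2$. Combining the two regimes gives LHS $\leq \max(2T_1, T_1 + T_2) \leq 2T_1 + T_2$, as desired; the delicate step is purely the $\z/n$ pigeonhole claim, the rest being bookkeeping after the substitution $p = \pi(k)$.
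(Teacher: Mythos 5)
Your proof is correct and follows essentially the same route as the paper: split by whether $T_1$ is at least roughly $n/2$, and in the small-$T_1$ regime show that a minimizer $l_1$ of $T_1$ must simultaneously minimize $T_2$, so that the left-hand side is at most $T_1 + T_2$. The one place you are more careful than the paper is the explicit pigeonhole fact that any $S \subseteq \z/n$ with $|S| > n/2$ already achieves the maximum diameter $\lfloor n/2\rfloor$; the paper's write-up of its Case B2 instead derives $T_2 > n/2$ via a chain whose first step (diameter $\geq$ cardinality) is not literally valid, relying on the resulting contradiction only as an aside. Your reparametrization $b_p := \tau\pi^{-1}(p) - p$ also streamlines the bookkeeping.
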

\begin{proof}
    Note that the direction $\geq$ is trivial (when we drop the constant 2).
    The point of this lemma is to show the reverse non-trivial direction.
    We will write:
    $$T_1:= \min_{l \in \z/n}\sum_{k \in \z/n} d_{\z/n}(\pi(k) - l,\tau(k)),\;\;\;
    T_2 := \min_{l \in \z/n}diam_{\z/n}(\{0,l\}\cup\{p:\pi^{-1}(p) \neq \tau^{-1}(p-l)\})$$
    \\\textbf{CASE A:} $T_1 = \min_{l \in \z/n}\sum_{k \in \z/n} d_{\z/n}(\pi(k) - l,\tau(k)) \geq n/2$
    \\Observe that always $T_2 \leq n/2$, so the second term is of lower order compared to the first term, and we get the trivial estimate:
    $$\min_{l \in \z/n}\left(\sum_{k \in \z/n} d_{\z/n}(\pi(k) - l,\tau(k)) + diam_{\z/n}(\{0,l\}\cup\{p:\pi^{-1}(p) \neq \tau^{-1}(p-l)\})\right)$$
    $$\leq T_1 + n/2 \leq T_1 + T_1 \leq 2 T_1 + T_2.$$
    \\\textbf{CASE B:} $T_1 = \min_{l \in \z/n}\sum_{k \in \z/n} d_{\z/n}(\pi(k) - l,\tau(k)) < n/2$
    \\Note that in this case $|\{k: \pi(k)-l \neq \tau(k)\}| < n/2$ for any value of $l$ which attains this minimum.
    Also, via a change of variables:
    $$|\{k: \pi(k)-l \neq \tau(k)\}| = |\{k: k-l \neq \tau(\pi^{-1}(k))\}|
    = |\{k: \tau^{-1}(k-l) \neq \pi^{-1}(k)\}|,$$
    so for any other value $l' \neq l$ we get that
    $$diam_{\z/n}(\{0,l'\}\cup\{p:\pi^{-1}(p) \neq \tau^{-1}(p-l')\})
    \geq |\{k: \tau^{-1}(k-l') \neq \pi^{-1}(k)\}|$$
    $$\geq |\{k: \tau^{-1}(k-l) = \pi^{-1}(k)\}| > n/2.$$
    We have two further subcases:
    \\\textbf{CASE B1}: The minimum of $T_2$ is attained at the same value $l$ as the minimum of $T_1$.
    \\In this case the claim is trivial (since the left-hand side equals $T_1 + T_2$).
    \\\textbf{CASE B2:} The minimum of $T_2$ is attained at a different value $l'\neq l$ than the minimum of $T_1$.
    \\In this case, as we saw above, $T_2 > n/2$ and  we get:
    $$T_1 + T_2 > T_1 + \dfrac{n}{2} = \min_{l \in \z/n}\sum_{k \in \z/n} d_{\z/n}(\pi(k) - l,\tau(k)) + \dfrac{n}{2}$$
    $$ \geq \min_{l \in \z/n}\left(\sum_{k \in \z/n} d_{\z/n}(\pi(k) - l,\tau(k)) + diam_{\z/n}(\{0,l\}\cup\{p:\pi^{-1}(p) \neq \tau^{-1}(p-l)\})\right).$$
    (Aside: since the resulting inequality is strict, CASE B2 can never happen.)
\end{proof}

\section{\textbf{Embedding the 1st term: distance to the median}}\label{SectionFirstTermEmbedding}

\begin{lemm}[Embedding the 1st term]\label{LemmaEmbeddingFirstTerm}
    .\\There exists a map $\Phi_1: Sym_n \to L_1$ such that for all $\pi,\tau \in Sym_n$ we have
    $$||\Phi_1(\pi)-\Phi_1(\tau)||_1 \asymp \min_{l \in \z/n}\sum_{k \in \z/n} d_{\z/n}(\pi(k) - l,\tau(k)).$$
    In particular, the bi-Lipschitz distortion is $<2\pi \approx 6.28$.
\end{lemm}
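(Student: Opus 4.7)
The plan is to reduce the minimum-over-translates expression to an average of pairwise $\z/n$-distances, and then to embed each of those distances via the standard single-character embedding of the cycle into $L_1$.

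For the first reduction, set $a_k:=\pi(k)-\tau(k)\in\z/n$; by translation invariance $\min_{l\in\z/n}\sum_{k\in\z/n}d_{\z/n}(\pi(k)-l,\tau(k))=\min_{l}\sum_{k}d_{\z/n}(a_k,l)$, which is the total cost of translating the point cloud $\{a_k\}\subset\z/n$ to an optimal center. A short triangle-inequality argument (Rabinovich \cite{rabinovich2008average}) gives the ``median equals average'' estimate
$$\tfrac{1}{2n}\sum_{k,k'}d_{\z/n}(a_k,a_{k'})\ \le\ \min_{l}\sum_{k}d_{\z/n}(a_k,l)\ \le\ \tfrac{1}{n}\sum_{k,k'}d_{\z/n}(a_k,a_{k'}),$$
and the identity $d_{\z/n}(a_k,a_{k'})=d_{\z/n}(\pi(k)-\pi(k'),\,\tau(k)-\tau(k'))$ identifies the right-hand side with an average of ``pair-invariant'' cycle distances that depend only on pairwise differences of $\pi$ and $\tau$.

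For the second step I would use the single-character embedding $e:\z/n\to\cx$ given by $e(x):=e^{2\pi i x/n}$, composed with the standard isometric embedding of the Euclidean plane into $L_1$. It is group-invariant, $|e(x)-e(y)|=2\sin(\pi d_{\z/n}(x,y)/n)$, and the elementary inequality $\tfrac{2}{\pi}\theta\le \sin\theta\le \theta$ on $[0,\pi/2]$ gives $\tfrac{4}{n}d_{\z/n}(x,y)\le |e(x)-e(y)|\le \tfrac{2\pi}{n}d_{\z/n}(x,y)$, i.e.\ bi-Lipschitz distortion $\pi/2$.

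To assemble $\Phi_1$, I would take, for each ordered pair $(k,k')\in(\z/n)^2$, a separate copy of $L_1$ carrying the coordinate $\tfrac{1}{n}\,e(\pi(k)-\pi(k'))$, so that $\Phi_1(\pi)$ lives in the $\ell_1$-sum of $n^2$ copies of $L_1$. By group-invariance of $e$,
$$\|\Phi_1(\pi)-\Phi_1(\tau)\|_1=\tfrac{1}{n}\sum_{k,k'}\bigl|e(\pi(k)-\pi(k'))-e(\tau(k)-\tau(k'))\bigr|,$$
and combining the two steps yields
$$\tfrac{4}{n}\min_l\sum_k d_{\z/n}(\pi(k)-l,\tau(k))\ \le\ \|\Phi_1(\pi)-\Phi_1(\tau)\|_1\ \le\ \tfrac{4\pi}{n}\min_l\sum_k d_{\z/n}(\pi(k)-l,\tau(k)),$$
so after a harmless rescaling by $n/4$, $\Phi_1$ is a bi-Lipschitz embedding with distortion at most $\pi<2\pi$. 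There is no single hard step: the median-to-average reduction and the character embedding of $\z/n$ are both classical, and the only subtlety is to combine them in a \emph{group-invariant} way, so that the pair-invariant quantity produced in the first step lifts to an actual difference $\Phi_1(\pi)-\Phi_1(\tau)$ in the target $L_1$-space.
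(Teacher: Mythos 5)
Your proposal is correct and follows essentially the same route as the paper: the map $\Phi_1(\pi)=(e^{2\pi i(\pi(k)-\pi(k'))/n})_{k,k'}$, the translation identity $d_{\z/n}(\pi(k)-\pi(k'),\tau(k)-\tau(k'))=d_{\z/n}(a_k,a_{k'})$, the comparison between the complex exponential and $d_{\z/n}$, and the median-to-average estimate. A small but genuine simplification in your write-up is that you compare the double sum directly against $\min_{l\in\z/n}$ (the lower bound holds for an arbitrary fixed $l$ by the triangle inequality, and the upper bound holds because the minimum over all $l$ is at most the minimum over cloud points), which makes the paper's separate observation that on the circle a cloud always contains one of its own medians unnecessary; you also make explicit the isometric embedding $\cx\hookrightarrow L_1$ and obtain the slightly sharper distortion constant $\pi$ rather than the paper's $2\pi$.
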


The following observation is well-known (e.g. see \cite{rabinovich2008average}).

\begin{obs}[Average distance vs average distance to a median in the cloud]
    .\\For any metric space $(X,d)$ and any set of points $x_1,...,x_n \in X$
    $$\dfrac{1}{2}\dfrac{1}{n^2} \sum_{i=1}^n \sum_{j=1}^n d(x_i,x_j) \leq \min_{1 \leq r \leq n} \dfrac{1}{n} \sum_{j=1}^n d(x_r,x_j) \leq \dfrac{1}{n^2} \sum_{i=1}^n \sum_{j=1}^n d(x_i,x_j).$$
\end{obs}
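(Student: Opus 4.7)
The plan is to handle the two inequalities separately, each by a one-line argument.

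For the upper bound, I will use that the minimum of a family of numbers is bounded by their average. Since
$$\frac{1}{n}\sum_{r=1}^n \left(\frac{1}{n}\sum_{j=1}^n d(x_r,x_j)\right) = \frac{1}{n^2}\sum_{i=1}^n \sum_{j=1}^n d(x_i,x_j),$$
the minimum over $r$ of $\frac{1}{n}\sum_j d(x_r,x_j)$ is at most this average, which gives the right-hand inequality immediately.

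For the lower bound, the key tool is the triangle inequality applied with any fixed $r$ as an intermediate point. For every $i,j$ we have $d(x_i,x_j) \leq d(x_i,x_r) + d(x_r,x_j)$. Summing over $i$ and $j$ yields
$$\sum_{i=1}^n \sum_{j=1}^n d(x_i,x_j) \;\leq\; n\sum_{i=1}^n d(x_i,x_r) + n\sum_{j=1}^n d(x_r,x_j) \;=\; 2n\sum_{j=1}^n d(x_r,x_j),$$
using symmetry of $d$. Dividing by $2n^2$ shows that $\frac{1}{2n^2}\sum_{i,j}d(x_i,x_j) \leq \frac{1}{n}\sum_{j}d(x_r,x_j)$ for every $r$, so in particular for the index that attains the minimum.

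There is no genuine obstacle here: both inequalities are tight in spirit (the lower one is a standard "1-median vs. average pairwise distance" comparison, sometimes attributed to folklore/Gower) and do not require anything beyond non-negativity, symmetry, and the triangle inequality. The only micro-decision is whether to present the lower bound via pointwise triangle inequality and double summation (as above) or via the equivalent formulation $\sum_j d(x_r,x_j) \geq \frac{1}{n}\sum_i \sum_j d(x_i,x_j) - \sum_i d(x_i,x_r)$ combined with choosing $r$ to minimize $\sum_i d(x_i,x_r)$; I would go with the double-summation version because it avoids any optimization and is the cleanest two-line proof.
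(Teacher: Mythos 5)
Your proof is correct and follows essentially the same route as the paper: the upper bound from ``minimum is at most the average,'' and the lower bound from the triangle inequality $d(x_i,x_j)\le d(x_i,x_r)+d(x_r,x_j)$ summed over $i,j$ for an arbitrary center $r$. (The paper's displayed line has a small typo, writing $d(x_r,x_j)+d(x_r,x_j)$ where $d(x_i,x_r)+d(x_r,x_j)$ is meant; your version is the cleaned-up statement of the same argument.)
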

\begin{proof}
    The second inequality is trivial since the minimum of a list of numbers is always $\leq$ the average.
    For the first inequality, for all $r \in [n]$ we use the triangle inequality:
    $$\dfrac{1}{n^2} \sum_{i=1}^n \sum_{j=1}^n d(x_i,x_j) \leq \dfrac{1}{n^2} \sum_{i=1}^n \sum_{j=1}^n (d(x_r,x_j) + d(x_r,x_j))
    = \dfrac{2}{n} \sum_{j=1}^n d(x_r,x_j)$$
\end{proof}

\begin{obs}[On the circle every cloud contains one of its median points]
    .\\For any $n,m \in \n$ and any set of points $x_1,...,x_m \in \z/n$ we have:
    $$\min_{1\leq r \leq m} \sum_{k=1}^m d_{\z/n}(x_k,x_r) = \min_{l \in \z/n} \sum_{k=1}^m d_{\z/n}(x_k,l).$$
\end{obs}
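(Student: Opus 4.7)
The plan is to reduce this to the classical one-dimensional median inequality on the real line by unwrapping the circle around a chosen point. Let $l^{*} \in \z/n$ be any global minimizer of $f(l):=\sum_{k=1}^m d_{\z/n}(x_k,l)$; since the reverse inequality $\min_{r} f(x_r) \geq \min_{l} f(l)$ is immediate, it suffices to exhibit some index $r$ with $f(x_r) \leq f(l^{*})$.

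First, I would define an unwrap $\phi: \z/n \to \z$ whose image is a set of $n$ consecutive integers centered at $l^{*}$, with an arbitrary tiebreaking convention when $n$ is even (so that one of $l^{*}\pm n/2$ is picked as the representative of the antipode of $l^{*}$). By construction, $d_{\z/n}(x,l^{*}) = |\phi(x)-l^{*}|$ for every $x \in \z/n$, since within a window of radius $n/2$ around $l^{*}$ the integer distance on the unwrapped line coincides with the shorter arc length on the circle.

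Next, I would apply the classical fact that for any finite list $y_1,\ldots,y_m$ of real numbers, the function $\tilde f(l):=\sum_{k=1}^{m} |y_k-l|$ is minimized over the real line at a median of the $y_k$'s, which is one of the $y_k$'s themselves. Applied to $y_k := \phi(x_k)$, this produces an index $r$ with $\tilde f(\phi(x_r)) \leq \tilde f(l^{*})$.

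Finally, I would translate back to the circle. The identity $\tilde f(l^{*})=f(l^{*})$ holds by the key property of $\phi$, and the bound $d_{\z/n}(x_k,x_r) \leq |\phi(x_k)-\phi(x_r)|$, which is automatic since $d_{\z/n}(\cdot,\cdot)$ is the minimum of two arc lengths, gives $f(x_r) \leq \tilde f(\phi(x_r))$. Chaining these inequalities yields $f(x_r) \leq f(l^{*})$, as required. No step presents a genuine obstacle; the only delicate point is setting up the unwrap so that $d_{\z/n}(x,l^{*}) = |\phi(x)-l^{*}|$ holds for every $x$, which is only an issue at the antipode of $l^{*}$ when $n$ is even and is handled by any tiebreaking convention.
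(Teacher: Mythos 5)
Your argument is correct and takes a genuinely different route from the paper's. You reduce the problem to the classical fact that $\sum_k |y_k - l|$ over $l \in \real$ is minimized at a median (hence at one of the $y_k$'s) by unwrapping the circle into a window of $n$ consecutive integers centered at a circle-minimizer $l^*$; the two inequalities $\tilde f(l^*) = f(l^*)$ (exact at the center of the unwrap, modulo the even-$n$ antipode tiebreak you note) and $d_{\z/n}(x_k, x_r) \leq |\phi(x_k) - \phi(x_r)|$ (circle distance is a minimum over lifts) then close the loop. The paper instead works directly on the circle: it splits $\z/n$ at the antipode $\bar l$ of a minimizer $l$ into two half-arcs, picks $x_r$ to be the data point on the heavier half-arc nearest to $l$, and shows by a term-by-term comparison (equality $d(x_k,x_r)=d(x_k,l)-d(x_r,l)$ on the same side, triangle inequality $d(x_k,x_r)\leq d(x_k,l)+d(x_r,l)$ on the other) that the objective does not increase when shifting from $l$ to $x_r$, because the gains on the heavier side cancel the losses on the lighter side. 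Your reduction is conceptually cleaner in that it outsources the combinatorics to a known 1D statement, at the modest cost of having to set up the unwrap carefully; the paper's argument is self-contained and requires no auxiliary median fact on $\real$. Both yield the same conclusion and neither has a gap.
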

\begin{proof}
    Let $l$ be a minimizer for $\min_{l \in \z/n} \sum_{k=1}^m d_{\z/n}(x_k,l)$.
    If $l \in \{x_1,...,x_m\}$ then we are done, so suppose not.
    Consider the antipodal point $\bar{l}:=\lfloor n/2 \rfloor +l$ and the two intervals $[l,\bar{l}]$ and $[\bar{l},l]$ whose union is the whole circle $\z/n$.
    Without loss of generality, assume that 
    $$|\{x_1,...,x_m\} \cap [\bar{l},l]| \geq |\{x_1,...,x_m\} \cap [l,\bar{l}]|.$$
    Let $x_r \in \{x_1,...,x_m\} \cap [\bar{l},l]$ be the point in $\{x_1,...,x_m\} \cap [\bar{l},l]$ which is closest to $l$.
    We claim that $x_r$ is also a median:
    $$\sum_{k=1}^m d_{\z/n}(x_k,x_r)  = \sum_{x_k \in [\bar{l},l]}^m d_{\z/n}(x_k,x_r) + \sum_{x_k \in (\bar{l},l]}^m d_{\z/n}(x_k,x_r)$$
    $$ \leq \sum_{x_k \in [\bar{l},l]}^m (d_{\z/n}(x_k,l) - d_{\z/n}(x_r,l))  + \sum_{x_k \in (\bar{l},l]}^m (d_{\z/n}(x_k,l) + d_{\z/n}(x_r,l))$$
    $$ = \sum_{k=1}^m d_{\z/n}(x_k,l) - d_{\z/n}(x_r,l)\left(|\{x_1,...,x_m\} \cap [\bar{l},l]| - |\{x_1,...,x_m\} \cap [l,\bar{l}]|\right) \leq \sum_{k=1}^m d_{\z/n}(x_k,l).$$
\end{proof}

\begin{proof}[Proof for embedding the first term in $L_1$]
    We map $\Phi_1: Sym_n \to l_1^{n^2}$ by
    $$\Phi_1(\pi) := \left(e^{2\pi i (\pi(k)-\pi(r))/n}\right)_{k,r \in [n]}\;\;\;for\;all\;\pi \in Sym_n.$$
    For all $\pi,\tau \in Sym_n$ we have:
    $$||\Phi_1(\pi)-\Phi_1(\tau)||_1 = \sum_k \sum_r |e^{2\pi i (\pi(k)-\pi(r))/n} - e^{2\pi i (\tau(k)-\tau(r))/n}|$$
    $$\asymp \sum_k \sum_r \dfrac{1}{n} d_{\z/n}(\pi(k)-\pi(r),\tau(k)-\tau(r))
    = \dfrac{1}{n} \sum_k \sum_r d_{\z/n}(\pi(k)-\tau(k),\pi(r)-\tau(r))$$
    $$\asymp \min_{r} \sum_k d_{\z/n}(\pi(k)-\tau(k),\pi(r)-\tau(r))
    = \min_{l \in \z/n} \sum_k d_{\z/n}(\pi(k)-\tau(k),l).$$
    Finally, observe that in each of the two $\asymp$ steps we pay factors $\pi$ and $2$ respectively.
\end{proof}

\section{\textbf{Embedding the 2nd term: a lamplighter who forgets his position}}\label{SectionSecondTermEmbedding}

\begin{lemm}[Embedding the 2nd term]\label{LemmaEmbeddingSecondTerm}
    .\\There exists a $4$-Lipschitz function $\Phi_2: Sym_n \to L_1$ such that
    for all $\pi, \tau \in Sym_n$
    \\\textbf{if} $\min_l \sum_{k} d(\pi(k),\tau(k)+l) < n/3$, \textbf{then}
    $$||\Phi_2(\pi)-\Phi_2(\tau)||_1 \geq \dfrac{1}{8} \min_{l \in \z/n}diam_{\z/n}(\{0,l\}\cup\{p:\pi^{-1}(p) \neq \tau^{-1}(p-l)\}).$$
\end{lemm}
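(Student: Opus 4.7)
The plan is to build $\Phi_2$ by adapting the second lamplighter embedding of Naor-Peres \cite{naor2008embeddings}, modifying it to accommodate the unknown ``pointer.'' Viewing each permutation $\pi \in Sym_n$ as carrying the lamp configuration $p \mapsto \pi^{-1}(p)$, the second term $T_2(\pi,\tau) := \min_{l \in \z/n} diam_{\z/n}(\{0,l\}\cup\{p:\pi^{-1}(p) \neq \tau^{-1}(p-l)\})$ is precisely a lamplighter-type distance between the pairs $(\pi, 0)$ and $(\tau, l)$, minimized over the unknown offset $l$. First I would write down a Naor-Peres style embedding $F : Sym_n \times \z/n \to L_1$, realized using coordinates indexed by arcs of $\z/n$, so that $||F(\pi,0) - F(\tau,l)||_1$ is $O(1)$-comparable to $diam_{\z/n}(\{0,l\}\cup\{p:\pi^{-1}(p) \neq \tau^{-1}(p-l)\})$.

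Next, I would ``forget the pointer'' by identifying, in the $L_1$ target, coordinates indexed by arcs that are cyclic translates of each other. Setting $\Phi_2(\pi) := \widetilde F(\pi, 0)$, where $\widetilde F$ denotes composition with this identification map, yields an embedding in which the choice of $l$ is invisible. The upper bound $||\Phi_2(\pi) - \Phi_2(\tau)||_1 = O(T_2(\pi,\tau))$ is automatic because the quotient can only decrease norms and we may evaluate on the optimal offset $l$. The $4$-Lipschitz property reduces to checking the generators: the cycle $c$ merely shifts the pointer, whose contribution lives in the identified coordinates and so changes $\Phi_2$ only by $O(1)$; the transposition $t$ edits the lamp configuration at two sites only, a local perturbation which contributes $O(1)$ in $L_1$.

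The main obstacle is the lower bound. A priori, the coordinate identification could collapse the $L_1$ signal below $T_2$, because an incorrect offset $l' \neq l^*$ might align with the ``correct'' disagreement arc under cyclic translation and produce spurious cancellations. This is precisely why the hypothesis $T_1 < n/3$ is imposed: as in CASE B of the previous lemma, at the optimal offset $l^*$ the disagreement set $\{p : \pi^{-1}(p) \neq \tau^{-1}(p-l^*)\}$ has cardinality $< n/3$, while at every other offset $l'$ it has cardinality $> 2n/3$ (hence diameter of order $n$). Thus the ``correct'' disagreement set fits in an arc of length $< n/3$ on the cycle, and its cyclic translates in the quotient class are spatially separated from the much larger disagreement regions produced by incorrect offsets, so the identification cannot merge these two contributions. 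I expect the most delicate step to be choosing the arc-identification scheme precisely so that the quotient is aggressive enough to enforce Lipschitzness against $c$, yet restrained enough that under $T_1 < n/3$ the surviving signal still recovers a constant-factor multiple of $T_2$, yielding the explicit lower bound with constant $1/8$.
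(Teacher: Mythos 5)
Your construction is essentially the paper's: a Naor--Peres style embedding indexed by arcs of $\z/n$, followed by identifying coordinates indexed by arcs that are cyclic translates of one another. In the paper this identification is implemented by the $list(\pi^{-1},J)$ notation (recording the values of $\pi^{-1}$ on an interval $J$ but forgetting where $J$ starts) together with the indicator $1_{\{0\notin J^{oo}\}}$ which plays the role of the pointer contribution. Your Lipschitz argument also runs along the same lines as the paper's: the cyclic generator merely translates the intervals, so after identification only the indicator terms change ($O(n)$ intervals, contributing $O(1)$ after the $1/n$ normalization), and the transposition only affects intervals hitting $\{0,1\}$.

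The gap is in the lower bound, and it is a real one. You assert that because the optimal-offset disagreement set $D = \{p : \pi^{-1}(p)\neq\tau^{-1}(p-l^*)\}$ has cardinality $< n/3$, it ``fits in an arc of length $<n/3$.'' This does not follow: a set of cardinality $< n/3$ in $\z/n$ can be spread across the whole circle and have diameter up to $n/2$ (e.g.\ an arithmetic progression of common difference $3$). Consequently the ``spatial separation'' picture you use to argue that the quotient cannot merge contributions from different offsets does not hold up. More to the point, the true reason the identification is harmless is not spatial at all. What you need is the paper's Observation~\ref{ObservationCoordinateCollapse}: whenever an interval $J$ satisfies $\emptyset\neq J\cap D\neq J$, its coordinate $v_{|J|,list(\pi^{-1},J)}$ is unequal to $v_{|J'|,list(\tau^{-1},J')}$ for \emph{every} $J'\in\mathcal{J}$, because the value-list of a permutation on an interval determines that interval, and then a single disagreement point inside $J$ prevents a match. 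This uses injectivity of permutations, not any bound on $|D|$ or on $diam(D)$. The hypothesis $|D|<n/3$ is instead consumed by the separate counting lemma (Observation~\ref{ManyIntervalsFeelMass}), which lower-bounds the number of intervals $J$ with $\emptyset\neq J\cap D\neq J$ and $0\notin J^{oo}$ by a constant times $n\cdot diam_{\z/n}(\{0\}\cup D)$; without $|D|\le n/3$, the ``far side'' of the circle could be entirely filled by $D$ and the count could fail. So your outline correctly identifies the construction, the obstacle, and where the hypothesis should enter, but the mechanism you propose for closing the lower bound is incorrect, and the key injectivity observation that actually makes it work is missing.
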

Denote by $\mathcal{J}$ the set of all intervals $J = [a,b] = \{a, a+1,...,b\} \subset \z/n$ and by $J^{oo} := [a+2,b-2]$ the double-interior of $J$ (which is empty when $|J|\leq 2$).
Given any function $f: J \to \z/n$ and $J=[a,b] \in \mathcal{J}$ we will write $list(f,J)$ for the function
$$list(f,J): \{0,1,...,b-a\} \to \z/n: k \mapsto f(k + a).$$
Let $\{v_{k,f}\}_{k,f}$ be a standard basis of $l_1^N$ where $1 \leq k \leq n$ and $f:\{0,1,...,k\} \to \z/n$ is a function, and $N \in \n$ is the number of all possible pairs $(k,f)$.
Define:
$$\Phi_2(\pi) := \dfrac{1}{n}\sum_{J \in \mathcal{J}} 1_{\{0 \notin J^{oo}\}} v_{|J|,list(\pi^{-1},J)}.$$
The role of the notation $list(\pi^{-1},J)$ is to record the permutation $\pi^{-1}$ on the interval $J$ but forget the starting point of the interval.
The above embedding is similar to the one in \cite{naor2008embeddings} except for this identification of the coordinates.
\\\textbf{Upper Bound} We show that $\Phi_2$ is Lipschitz with respect to the word metric. Fix $\pi \in Sym_n$.
\\\textbf{Across a transposition edge:} $\pi^{-1}$ and $((01)\pi)^{-1}$ are identical functions except at the points $0$ and $1$. 
This means that for all $J \in \mathcal{J}$ with 
$$\{0,1\}\cap J = \emptyset \iff v_{|J|,list(\pi^{-1},J)} = v_{|J|,list(((01)\pi)^{-1},J)}.$$
Also, if $0 \in J^{oo}$, then the coefficient of $v_{|J|,list(\pi^{-1},J)}$ will vanish and likewise for $v_{|J|,list(((01)\pi)^{-1},J)}$.
There are at most $4n$ intervals $J$ with $\{0,1\}\cap J \neq \emptyset$ and $0 \notin J^{oo}$.
\\We conclude that $||\Phi_2(\pi)-\Phi_2((01)\pi)||_1 \leq 4$.
\\\textbf{Across a cyclic permutation edge:} 
Observe that for all $J \in \mathcal{J}$,
$$list(\pi^{-1},J) = list((c\pi)^{-1},J+1),$$
so the corresponding coordinates are identical.
The only way to get a nonzero summand in the $l_1^N$ norm is whenever $0 \in J^{oo}$ and $0 \notin (J+1)^{oo} = J^{oo} +1$ or whenever $0 \notin J^{oo}$ and $0 \in (J+1)^{oo} = J^{oo} +1$.
The number of such $J$ is $\leq 2n$,
\\We conclude that $||\Phi_2(\pi)-\Phi_2(c\pi)||_1 \leq 2$.

\textbf{Lower Bound}
\\To avoid any ambiguity about the interval notation, in what follows, for all $x,y,z \in \z/n$, we write $x < y < z$ to say that the counterclockwise path from $x$ to $z$ passes through $y$.
Also, we use the notation $J=[a,b]:=\{x \in \z/n: a \leq x \leq b\}$. (That way, for instance, $[0,n-3]$ is an interval of size $n-2$ whereas $[n-3,0]$ is an interval of size $4$.)
\begin{obs}\label{ManyIntervalsFeelMass}
    For any subset $S \subset \z/n$ with $|S| \leq n/3$ we have:
   $$|\{J \in \mathcal{J}|\emptyset \neq J \cap S \neq J\;and\; 0 \notin J^{oo}\}| \geq \dfrac{n}{4} \;diam_{\z/n}(\{0\}\cup S).$$
\end{obs}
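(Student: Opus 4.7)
The plan is to pick one well-chosen point $p \in S$ that witnesses (at least) half the diameter, and count those intervals $J$ that contain $p$ but avoid $0$ entirely; such intervals automatically have $0 \notin J^{oo}$ and meet $S$, so the only condition left is $J \not\subseteq S$. To locate $p$, write $D := diam_{\z/n}(\{0\} \cup S)$ and let $x, y \in \{0\} \cup S$ realize $D$. If $0 \in \{x, y\}$, take $p$ to be the other point, which lies in $S$ at distance $D$ from $0$; otherwise $x, y \in S$ and the triangle inequality $d_{\z/n}(0, x) + d_{\z/n}(0, y) \geq D$ forces one of them to lie at distance $\geq D/2$ from $0$, and I take that one to be $p$. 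After the reflection $x \mapsto -x$ of $\z/n$ if needed, I may assume $p$ sits at clockwise distance $D_0 := d_{\z/n}(0,p) \in [D/2, \lfloor n/2 \rfloor]$ from $0$.

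Viewing $\z/n \setminus \{0\}$ as the linear arc $\{1, \ldots, n-1\}$, every interval $J$ of $\z/n$ with $0 \notin J$ is a subarc of this linear arc. The subarcs containing $p$ are parametrized by a left endpoint $a \in \{1, \ldots, D_0\}$ and a right endpoint $b \in \{D_0, \ldots, n-1\}$, giving exactly $D_0(n - D_0)$ intervals, each satisfying $p \in J \cap S$ and $0 \notin J$ (and hence $0 \notin J^{oo}$). Among these, the intervals fully contained in $S$ must lie in the maximal arc $A_p \subseteq S$ through $p$, and an easy count bounds their number by $(|A_p|+1)^2/4$. Since $A_p$ is a consecutive block of length $|A_p| \leq |S| \leq n/3 < n/2$, its circular diameter equals $|A_p|-1$, and because $A_p \cup \{0\}$ has diameter $\leq D$, this gives $|A_p| \leq \min(D+1, n/3)$.

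The number of counted intervals is therefore at least $D_0(n - D_0) - (|A_p|+1)^2/4$. Using $D_0 \in [D/2, n/2]$ gives $D_0(n - D_0) \geq Dn/2 - D^2/4$, so the desired lower bound $(n/4)D$ reduces to the arithmetic inequality $D(n-D) \geq (|A_p|+1)^2$ -- this final inequality is the main obstacle. It splits into two regimes: when $D \leq n/3$ one uses $|A_p|+1 \leq D+2$ and verifies $(D+2)^2 \leq D(n-D)$, which holds once $n$ is moderately larger than $D$; when $D > n/3$ one uses $|A_p|+1 \leq n/3+1$ together with $D(n-D) \geq 2n^2/9$. Small-$n$ corner cases can be absorbed by the $\Theta(n)$ additional counted intervals of the form $J=[a,b]$ with $p \in J$ and $0 \in \{a, a+1, b-1, b\}$, i.e., intervals through $p$ with $0$ sitting at a boundary position of $J$; these have not been used in the main count.
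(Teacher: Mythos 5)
Your proposal takes a genuinely different route than the paper. The paper locates a \emph{boundary edge} of $S$ far from $0$ --- either $\{x,x+1\}$ where $x=\argmax_{s\in S}d_{\z/n}(s,0)$ (Case A) or $\{y,y+1\}$ where $y=\argmax_{z\notin S}d_{\z/n}(z,0)$ (Case B, needed only when $x$ is the antipode, which is where the hypothesis $|S|\leq n/3$ enters) --- and then counts intervals $[a,b]$ whose endpoints straddle that edge, with $0$ allowed as an endpoint. Every such interval automatically contains a point of $S$ \emph{and} a point of $S^c$, so the condition $\emptyset\neq J\cap S\neq J$ holds for free, and no subtraction is needed. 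You instead count intervals through a far point $p\in S$ avoiding $0$ and subtract off those lying entirely inside $S$; the subtraction is what introduces the extra burden of controlling $|A_p|$.

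The gap is that the arithmetic inequality you reduce to, $D(n-D)\geq (|A_p|+1)^2$, is simply false for small $n$: take $n=6$, $S=\{1,2\}$, so $D=2$ and $|A_p|=2$, giving $D(n-D)=8<9=(|A_p|+1)^2$. (The statement itself is true in this case, so the failure lies in your chain of estimates --- in particular, replacing $D_0(n-D_0)$ by its lower bound $(D/2)(n-D/2)$ is wasteful by up to a factor of two in $D_0$, and the $(|A_p|+1)^2/4$ bound on bad intervals is only tight when $p$ sits at the center of $A_p$.) A threshold analysis shows $D(n-D)\geq(|A_p|+1)^2$ only once roughly $n\gtrsim 15$, and for the remaining small $n$ your proposed fix --- invoking intervals with $0$ at a boundary position --- is not actually carried out. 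That fix would work (there are exactly $n$ such intervals through $p$ with $0\in\{a,b\}$, none of them inside $S$, and adding $n$ to the count does push the inequality through for all $n$), but as written the proof is incomplete. The paper's boundary-edge choice is the cleaner idea: it makes the subtraction vanish, removes the need for any bound on $|A_p|$, and avoids the small-$n$ casework entirely.
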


\begin{proof}
    For notational simplicity assume that $n$ is even. We consider the points 
    $$x:= \argmax_{x \in S} d_{\z/n}(x,0),\;and\;\;\;y:= \argmax_{y \in \z/n - S} d_{\z/n}(y,0).$$
    \textbf{CASE A}: $d(0,x) \neq n/2 = diam(\z/n)$.
    \\Without loss of generality suppose that $0 < x < n/2$ (so $x$ in the "right" side of the clock).
    By definition $x+1 \notin S$.
    Any interval $J = [a,b] \in \mathcal{J}$ with
    $0 \leq a \leq x < x+1 \leq b \leq 0$
    will satisfy $ \emptyset \neq J \cap S \neq J$ and $0 \notin J^{oo}$.
    In total, the number of such intervals is
    $$\geq (n-  d_{\z/n}(x,0)) d_{\z/n}(x,0) \geq (n/2)  d_{\z/n}(x,0) \geq (n/4) diam_{\z/n}(\{0\}\cup S).$$
    \textbf{CASE B}: $x = n/2$.
    \\Without loss of generality, suppose that $0 < y < n/2$. By definition $y+1 \in S$.
    Also, since $|S| \leq n/3$, we have $d_{\z/n}(y,0) \geq n/3$.
    Any interval $J = [a,b] \in \mathcal{J}$ with
    $0 \leq a \leq y < y+1 \leq b \leq 0$
    will satisfy $ \emptyset \neq J \cap S \neq J$ and $0 \notin J^{oo}$.
    In total, the number of such intervals is
    $$\geq (n-  d_{\z/n}(y,0)) d_{\z/n}(y,0) \geq (n/2)(n/3) = (n/3) diam_{\z/n}(\{0\}\cup S)$$
\end{proof}
Now fix $\pi, \tau \in Sym_n$
with $\min_l \sum_{k} d(\pi(k),\tau(k)+l) < n/3$,
so there exists an "obvious pointer" $l \in \z/n$ such that
$|\{p:\pi^{-1}(p) \neq \tau^{-1}(p-l)\}|<n/3$.
We denote the set of "positions of different entries" by
$$D:= \{p:\pi^{-1}(p) \neq \tau^{-1}(p-l)\}.$$
It is obvious that if an interval $J\in \mathcal{J}$ is disjoint from this set, $J \cap D = \emptyset$, then we get identical coordinates $v_{|J|,list(\pi^{-1},J)} = v_{|J|,list(\tau^{-1},J+l)}$.
We also have a partial converse:

\begin{obs}[some important coordinates have not been collapsed]\label{ObservationCoordinateCollapse}
.\\\textbf{If} $\emptyset \neq J \cap D \neq J$
\textbf{then} $v_{|J|,list(\pi^{-1},J)} \neq v_{|J'|,list(\tau^{-1},J')}$
for all $J' \in \mathcal{J}$.
\end{obs}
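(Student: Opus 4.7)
The plan is to argue by contradiction, proving the contrapositive: if some $J' \in \mathcal{J}$ has $v_{|J|, list(\pi^{-1}, J)} = v_{|J'|, list(\tau^{-1}, J')}$, then $J$ is either disjoint from $D$ or entirely inside $D$. Since $\{v_{k,f}\}$ is a standard basis indexed by pairs $(k, f)$, the equality of two basis vectors forces two conditions at once: $|J| = |J'|$, and the functional equality $list(\pi^{-1}, J) = list(\tau^{-1}, J')$ on their common domain.

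Writing $J = [a, b]$ and $J' = [a', b']$, unpacking the definition of $list$ turns the functional equality into the pointwise identity $\pi^{-1}(k+a) = \tau^{-1}(k+a')$ for every $k \in \{0, 1, \ldots, b-a\}$. Setting $l' := a - a' \in \z/n$, this becomes $\pi^{-1}(p) = \tau^{-1}(p - l')$ for every $p \in J$. Thus the coincidence of basis vectors forces $J$ to lie entirely inside the agreement set of $\pi^{-1}$ with the $l'$-shift of $\tau^{-1}$.

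The third step is a short rigidity argument that pins $l' = l$, using the other half of the hypothesis. Since $J \cap D \neq J$, there is a point $p_0 \in J \setminus D$; by definition of $D$ this means $\pi^{-1}(p_0) = \tau^{-1}(p_0 - l)$. Combining with the shift-$l'$ identity at $p = p_0$ gives $\tau^{-1}(p_0 - l) = \tau^{-1}(p_0 - l')$, and injectivity of $\tau^{-1}$ then yields $l = l'$ in $\z/n$. Plugging back in, $\pi^{-1}(p) = \tau^{-1}(p - l)$ for all $p \in J$, which is exactly $J \cap D = \emptyset$ — contradicting the other half of the hypothesis, $J \cap D \neq \emptyset$.

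The main obstacle is really just the notational bookkeeping around $list(\cdot, J)$ and the basis indexing: once one recognizes that the coordinate identifications in the definition of $\Phi_2$ amount to "record the values but forget the starting index", the rigidity step "one agreement point inside $J$ forces the shift to equal the canonical pointer $l$" is automatic from injectivity of $\tau^{-1}$. There is no genuinely hard step beyond this careful unpacking.
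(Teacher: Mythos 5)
Your proof is correct and follows essentially the same route as the paper's: use the forced equality of the $list$-functions to recognize that coincidence of basis vectors encodes a single shift $l' = a - a'$, then use one agreement point $p_0 \in J \setminus D$ (guaranteed by $J \cap D \neq J$) together with injectivity of $\tau^{-1}$ to pin $l' = l$, and finally contradict $J \cap D \neq \emptyset$. The paper's version is terser (it phrases the rigidity as ``$J'$ must equal the $l$-shift of $J$'') but the content is identical.
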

\begin{proof}
Pick $p \in J-D$ and observe that $\pi^{-1}(k) = \tau^{-1}(k-l)$ so the only way we could have $v_{|J|,list(\pi^{-1},J)} = v_{|J'|,list(\tau^{-1},J')}$ was if $J' = J+l$. (This is because in a permutation, each element appears only once.)
Picking any other point $p' \in J \cap D$, we see that
$v_{|J|,list(\pi^{-1},J)} \neq v_{|J|,list(\tau^{-1},J+l)}$
showing the claim.   
\end{proof}

We use Observations \ref{ManyIntervalsFeelMass} and \ref{ObservationCoordinateCollapse} to each of the summands in the definition of $\Phi_2(\pi)$ and $\Phi_2(\tau)$.
$$||\Phi_2(\pi)-\Phi_2(\tau)||_1 \geq 
\dfrac{1}{n}|\{J| \emptyset \neq J \cap D \neq J\;and\; 0 \notin J^{oo}\}| 
+ \dfrac{1}{n}|\{J| \emptyset \neq (J+l) \cap D \neq (J+l)\;and\;0 \notin J^{oo}+l\}|$$
$$\geq \dfrac{1}{4} \left(diam(\{0\}\cup D) + diam(\{-l\} \cup D)\right) \geq \dfrac{1}{8}diam(\{0,l\} \cup D)$$
$$=  \dfrac{1}{8} \min_{l \in \z/n}diam_{\z/n}(\{0,l\}\cup\{p:\pi^{-1}(p) \neq \tau^{-1}(p-l)\}),$$
and the proof of Lemma \ref{LemmaEmbeddingSecondTerm} is complete.

\section{\textbf{Putting everything together and finishing the proof of theorem \ref{MainTheorem}:}}\label{SectionFinishingTheProof}

Let $\Phi_1$ and $\Phi_2$ be the two embeddings in the above two sections 
and take their direct sum $\Phi=\Phi_1 \oplus \Phi_2: \pi \mapsto (\Phi_1(\pi),\Phi_2(\pi)) \in L_1 \oplus L_1$. We may rescale the map $\Phi_1$ so that $||\Phi_1||_{Lip} \leq 1$ and $||\Phi_1^{-1}||_{Lip} \leq 2 \pi$ (where by $\Phi_1^{-1}$ we mean the inverse on the image).
\\For all $\pi,\tau \in Sym_n$ we have two cases:
\\\textbf{CASE 1: $\min_l \sum_{k} d(\pi(k),\tau(k)+l) \geq n/3$}.
\\For the lower bound:
$$d(\pi,\tau) \geq \dfrac{1}{3} \min_{l \in \z/n}\left(\sum_{k \in \z/n} d_{\z/n}(\pi(k) - l,\tau(k)) + diam_{\z/n}(\{0,l\}\cup\{p:\pi^{-1}(p) \neq \tau^{-1}(p-l)\})\right)$$
$$\geq \dfrac{1}{3} \min_{l \in \z/n}\sum_{k \in \z/n} d_{\z/n}(\pi(k) - l,\tau(k))
\geq \dfrac{1}{3} ||\Phi_1(\pi) - \Phi_1(\tau)||_1$$
$$\geq \dfrac{1}{3} ||\Phi_1(\pi) - \Phi_1(\tau)||_1 + \dfrac{1}{3}||\Phi_2(\pi) - \Phi_2(\tau)||_1 - \dfrac{4}{3}\; d(\pi,\tau).$$
where we used $||\Phi_2(\pi)-\Phi_2(\tau)||_1 \leq 4 d(\pi,\tau)$. 
After rearranging terms, we obtain:
$$d(\pi,\tau) \geq 7 (||\Phi_1(\pi) - \Phi_1(\tau)||_1 + ||\Phi_2(\pi) - \Phi_2(\tau)||_1).$$
while for the upper bound:
$$d(\pi,\tau) \leq 6 \min_{l \in \z/n}\left(\sum_{k \in \z/n} d_{\z/n}(\pi(k) - l,\tau(k)) + diam_{\z/n}(\{0,l\}\cup\{p:\pi^{-1}(p) \neq \tau^{-1}(p-l)\})\right)$$
$$\leq 12 \min_{l \in \z/n}\sum_{k \in \z/n} d_{\z/n}(\pi(k) - l,\tau(k)) + 6 \min_{l \in \z/n}diam_{\z/n}(\{0,l\}\cup\{p:\pi^{-1}(p) \neq \tau^{-1}(p-l)\})$$
$$\leq 12 \min_{l \in \z/n}\sum_{k \in \z/n} d_{\z/n}(\pi(k) - l,\tau(k)) + 3 n 
\leq (12 + 9) \min_{l \in \z/n}\sum_{k \in \z/n} d_{\z/n}(\pi(k) - l,\tau(k))$$
$$\leq 21 \times 2 \pi ||\Phi_1(\pi) - \Phi_1(\tau)||_1 
\;\leq\; 21 \times 2 \pi (||\Phi_1(\pi) - \Phi_1(\tau)||_1 + ||\Phi_2(\pi) - \Phi_2(\tau)||_1)$$
\\\textbf{CASE 2: $\min_l \sum_{k} d(\pi(k),\tau(k)+l) < n/3$}.
\\For the lower bound
$$||\Phi_1(\pi) - \Phi_1(\tau)||_1 + ||\Phi_2(\pi) - \Phi_2(\tau)||_1
\leq \min_{l \in \z/n}\sum_{k \in \z/n} d_{\z/n}(\pi(k) - l,\tau(k)) + 4 d(\pi,\tau) \leq 7 d(\pi,\tau),$$
while for the upper bound:
$$d(\pi,\tau) \leq 12 \min_{l \in \z/n}\sum_{k \in \z/n} d_{\z/n}(\pi(k) - l,\tau(k)) + 6 \min_{l \in \z/n}diam_{\z/n}(\{0,l\}\cup\{p:\pi^{-1}(p) \neq \tau^{-1}(p-l)\})$$
$$ \leq 12 \times 2\pi ||\Phi_1(\pi) - \Phi_1(\tau)||_1 + 6 \times 8 ||\Phi_2(\pi) - \Phi_2(\tau)||_1 \leq 12 \times 2\pi (||\Phi_1(\pi) - \Phi_1(\tau)||_1 + ||\Phi_2(\pi) - \Phi_2(\tau)||_1)$$
Combining all the constants from the upper and lower bounds:
$$c_1(\Gamma(Sym_n,\{t,c\})) \leq 7 \times  21 \times 2\pi \approx 923.63 < 1000.$$

\section{\textbf{Remarks on embedding groups into Hilbert space}}\label{EuclideanSection}

This section contains results of Naor and are included with his permission.
Given a metric space $(X,d)$ denote by $c_2(X,d)$ the infimal bi-Lipschitz distortion of $X$ into Hilbert space $L_2$.

\begin{conj}[Cornulier-Tessera-Valette \cite{de2008isometric}]\label{AbelMeetEuclidInfinite}
    .\\Let $G$ be a finitely generated group with a word metric $d_{S}$.
    We have:
    \\$c_2(G,d_S) < \infty$
    $\iff$ $G$ has an abelian subgroup of finite index.
\end{conj}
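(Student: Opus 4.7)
The easy direction $(\Leftarrow)$ can be dispatched quickly. If $G$ has a finite-index abelian subgroup $H$, then $H \cong \z^r \times F$ for some finite abelian $F$, and the coordinate inclusion $\z^r \hookrightarrow \real^r$ gives a bi-Lipschitz map of $(H, d_{S \cap H})$ into $\real^r \subset L_2$ (the finite piece is absorbed as bounded error). One then extends to $G$ by picking coset representatives $g_1,\ldots,g_k$, mapping $g_i h \mapsto (\text{embedding of } h,\, i \cdot e)$ for a suitably large $e$, and invoking the fact that the word metrics on $G$ and $H$ are bi-Lipschitz equivalent up to a multiplicative factor bounded by the index.

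For $(\Rightarrow)$ the plan is to combine three large pieces. First, I would show that $c_2(G,d_S) < \infty$ forces polynomial growth of $G$: Hilbert-space bi-Lipschitz embeddability passes to asymptotic cones, and the Markov-type-$2$ inequality for $L_2$ (Naor-Peres-Schramm-Sheffield) controls the $L_2$-displacement of random walks in a way that is incompatible with superpolynomial growth, since random walks on such groups spread too slowly relative to what $L_2$ would require. Second, once polynomial growth is in hand, I would apply Gromov's theorem to conclude $G$ is virtually nilpotent, pass to a torsion-free nilpotent finite-index subgroup $N \leq G$ via Selberg's lemma, and note that $c_2(N, d_{S'}) < \infty$ for any finite generating set $S'$ of $N$. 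Third, I would differentiate: the asymptotic cone of $N$ is a Carnot group $C$ of the same nilpotency step as $N$, and a bi-Lipschitz embedding of $N$ into $L_2$ descends via ultralimits to a bi-Lipschitz embedding of $C$ into $L_2$. By Pauls' theorem (together with the Cheeger-Kleiner and Lee-Naor refinements), a non-abelian Carnot group admits no such embedding, so $C$ must be abelian, which forces $N$ to have step $1$, i.e., $N$ itself is abelian.

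The hard step is undoubtedly Step 1. Haagerup-property arguments yield only coarse (uniform) embeddings into $L_2$, not bi-Lipschitz ones, so the standard toolkit for $L_2$-embeddability is too weak here; one genuinely needs quantitative distortion obstructions that rule out every superpolynomial-growth group. A reasonable attack combines Markov type with compression-exponent arithmetic in Naor's $L_p$-compression framework, but no general statement along these lines is known, which is precisely why the conjecture remains open. Steps 2 and 3 are essentially in hand, though Step 3 requires care in setting up the ultralimit of a non-equivariant bi-Lipschitz map so that the limiting object on the Carnot cone is still a genuine bi-Lipschitz embedding rather than a degenerate or merely Lipschitz image.
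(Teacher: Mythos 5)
The statement you were asked to ``prove'' is labeled in the paper as a \emph{conjecture}, attributed to Cornulier--Tessera--Valette; the paper contains no proof of it and instead uses it as a hypothesis (in Proposition~\ref{Reduction} and the ensuing corollary). You correctly recognized this, and your write-up is honest that Step~1 is a genuine open gap, so there is nothing to ``compare against'' in the usual sense.

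That said, a few remarks on the substance. Your $(\Leftarrow)$ sketch is fine, and the paper's Proposition~\ref{AbelImpliesEuclid} carries out a quantitative version of exactly this argument (for finite groups, via flat tori and Khot--Naor/Haviv--Regev). Your Steps~2 and~3 for $(\Rightarrow)$ are also essentially the known state of affairs: given polynomial growth, Gromov gives virtual nilpotence, Pansu's theorem identifies the asymptotic cone as a Carnot group of the same step, the bi-Lipschitz map passes to the ultralimit (an ultrapower of Hilbert space, still a Hilbert space), and Pauls/Cheeger--Kleiner--Lee--Naor rule out a non-abelian Carnot group bi-Lipschitzly embedded in $L_2$ (indeed $L_1$). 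Minor nit: you do not need Selberg's lemma to pass to a torsion-free finite-index subgroup of a finitely generated nilpotent group --- the torsion elements already form a finite normal subgroup.

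The real issue is that your heuristic for why Step~1 ``should'' work is backwards. Markov type~$2$ for $L_2$ gives an \emph{upper} bound on random-walk displacement: any bi-Lipschitz copy of $G$ in $L_2$ inherits $\e\, d(W_0,W_t)^2 \lesssim t$. To turn this into an obstruction you need a matching \emph{lower} bound, i.e.\ genuine super-diffusivity $\e\, d(W_0,W_t) \gg \sqrt{t}$, which is precisely the strategy the paper itself deploys in Section~\ref{EuclideanSection} (via Conjecture~\ref{conjectureForSymmetricGroups}, Yadin's drift-exponent computation, Kesten's estimate, and Dixon/Friedman et al.). But superpolynomial growth does \emph{not} in general imply super-diffusive speed: the lamplighter $\z/2\z \wr \z$ has exponential growth yet diffusive speed $\asymp \sqrt{t}$, so Markov type~$2$ alone cannot rule it out of $L_2$. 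This is exactly why Step~1 remains open and why the paper keeps the statement at the level of a conjecture.
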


The following is a finite version of this conjecture.
Given a set $X$ and $r \in \n$ we write $X \choose r$ for the collection of all subsets of $X$ of size $r$.

\begin{conj}[Naor]\label{AbelMeetEuclidFinite}
    Let $\{G_n\}_n$ be a sequence of finite groups with $\sup_n rank(G_n) < \infty$.
    The following five conditions are equivalent:
    \\1. For all $r> \sup_n rank(G_n)$, all Cayley graphs of $\{G_n\}_n$ with $r$ generators embed into Hilbert space with uniformly bounded distortion:
    $$\sup_n \sup_{S \in {G_n \choose r}\;:\;<S> = G_n} c_2(G_n,d_{S}) < \infty$$
    2. There exists $r \in \n$ such that  all Cayley graphs of $\{G_n\}_n$ with $r$ generators embed into Hilbert space with uniformly bounded distortion:
    $$\sup_n \sup_{S \in {G_n \choose r}\;:\;<S> = G_n} c_2(G_n,d_{S}) < \infty$$
    3. For all $r> \sup_n rank(G_n)$ there exists a generating set of size $r$, $S_n$, for each $G_n$, such that the Cayley graphs embed into Hilbert space with uniformly bounded distortion:
    $$\sup_n \inf_{S \in {G_n \choose r}\;:\;<S> = G_n} c_2(G_n,d_{S}) < \infty$$
    4. There exists $r \in \n$ and a generating set of size $r$, $S_n$, for each $G_n$, such that the Cayley graphs embed into Hilbert space with uniformly bounded distortion:
    $$\sup_n \inf_{S \in {G_n \choose r}\;:\;<S> = G_n} c_2(G_n,d_{S}) < \infty$$
    5. There exists $r \in \n$ and a rank $\leq r$ abelian subgroup $H_n \leq G_n$ for each $G_n$ such that
    $$\sup_n [G_n:H_n] < \infty.$$
\end{conj}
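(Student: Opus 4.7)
The plan is to prove the cycle of implications $(5) \Rightarrow (1) \Rightarrow (2), (3) \Rightarrow (4) \Rightarrow (5)$. The implications $(1) \Rightarrow (2)$, $(1) \Rightarrow (3)$, $(2) \Rightarrow (4)$, and $(3) \Rightarrow (4)$ are immediate quantifier-weakenings (a $\sup$ over $S$ controls the value at any particular $S$, and a $\sup$ dominates an $\inf$). The real content lies in the construction $(5) \Rightarrow (1)$ and the obstruction $(4) \Rightarrow (5)$, where the latter is where Conjecture \ref{AbelMeetEuclidInfinite} gets invoked.

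For $(5) \Rightarrow (1)$, assume $H_n \leq G_n$ is abelian of rank $\leq r_0$ and index $\leq K$, and fix an arbitrary size-$r$ generating set $S$ of $G_n$. I would construct a uniformly bi-Lipschitz $L_2$-embedding in three steps: (i) a finite abelian group $A$ equipped with any word metric from a bounded-size generating set embeds into $L_2$ with distortion bounded by a function of that generating-set size alone, via the Fourier-type embedding $a \mapsto \bigl(\sqrt{\mu(\chi)}\,(1 - \chi(a))\bigr)_{\chi \in \hat A}$ for a suitably chosen finitely supported probability measure $\mu$ on $\hat A$; (ii) the metric space $(G_n, d_S)$ is bi-Lipschitz equivalent, with constants depending only on $r$ and $K$, to $(H_n, d')$ for an induced word metric $d'$ on $H_n$ with some generating set of size $\leq rK$, via a system of short coset representatives for $G_n / H_n$; (iii) compose (i) and (ii) to embed $G_n$ into $L_2$ with distortion bounded independently of $n$ and of $S$.

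For $(4) \Rightarrow (5)$, fix size-$r$ generating sets $S_n$ with $\sup_n c_2(G_n, d_{S_n}) \leq D < \infty$ and a non-principal ultrafilter $\mathcal{U}$ on $\mathbb{N}$. By compactness of the space of $r$-generated marked groups in the Cayley/Chabauty topology, the sequence $(G_n, S_n)$ has a marked $\mathcal{U}$-limit $(G_\infty, S_\infty)$, whose word metric is the pointwise $\mathcal{U}$-limit of the $d_{S_n}$. Recentering each embedding at the identity and rescaling, the maps $f_n$ assemble via $\mathcal{U}$-ultralimit into a bi-Lipschitz embedding $f_\infty: (G_\infty, d_{S_\infty}) \to \mathcal{H}$ of distortion $\leq D$, where the target $\mathcal{H}$ is an ultraproduct of Hilbert spaces and hence itself Hilbert. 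Applying Conjecture \ref{AbelMeetEuclidInfinite} to $G_\infty$ produces an abelian subgroup $A_\infty \leq G_\infty$ of some finite index $M$ and bounded rank $r_0$. Because $A_\infty$ is determined by finitely many word-level data in $S_\infty$ (commutation relations among a fixed generating list of $A_\infty$, together with coset-multiplication relations of the form $s t_j = t_{\sigma(s,j)} h_{s,j}$ with $h_{s,j} \in A_\infty$), these same relations hold in $(G_n, S_n)$ for $\mathcal{U}$-almost every $n$. This produces, for all but finitely many $n$, abelian subgroups $H_n \leq G_n$ with rank $\leq r_0$ and $[G_n : H_n] \leq M$, which is exactly condition (5).

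The main obstacle will be the bookkeeping in $(4) \Rightarrow (5)$: one must confirm that the ultralimit of Hilbert spaces remains Hilbert (standard, since the parallelogram identity is preserved under $\mathcal{U}$-limits), that the bi-Lipschitz constants survive the ultralimit undiminished, and most delicately that the entire data encoding "$A_\infty$ is abelian of rank $\leq r_0$ and index $\leq M$" is captured by finitely many word equations in the generators, so that marked-group convergence transfers it faithfully from $G_\infty$ back to the $G_n$. Once these standard but nontrivial checks are in place, Conjecture \ref{AbelMeetEuclidInfinite} is the sole substantive input, and the five conditions collapse to one.
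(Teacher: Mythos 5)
Your proposal has the same overall architecture as the paper's treatment: the easy quantifier-weakening implications, a constructive argument for $(5)\Rightarrow(1)$, and a compactness-plus-limit argument for $(4)\Rightarrow(5)$ conditional on Conjecture~\ref{AbelMeetEuclidInfinite}. (Note the paper does not claim to \emph{prove} this conjecture; it proves $(5)\Rightarrow(1)$ unconditionally as Proposition~\ref{AbelImpliesEuclid} and $(4)\Rightarrow(5)$ conditionally as Proposition~\ref{Reduction}, and you do the same.) The use of ultralimits instead of a convergent subsequence in the marked-group space, and of an ultraproduct Hilbert space instead of Ostrovskii's finite-determinacy theorem for locally finite spaces, are cosmetic variants of the paper's Steps 1--2.

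However, there is a genuine gap in step (i) of your $(5)\Rightarrow(1)$ argument. You assert that for a finite abelian group $A$ with a size-$r$ generating set, a Fourier-type embedding $a\mapsto(\sqrt{\mu(\chi)}(1-\chi(a)))_{\chi\in\hat A}$ with a ``suitably chosen'' $\mu$ gives distortion depending only on $r$. That assertion \emph{is} the hard content of Proposition~\ref{AbelImpliesEuclid} --- the paper even remarks that this step ``is not immediate, since one has to deal with arbitrary generating sets of abelian groups.'' The paper's route is to reduce the word metric to the quotient metric on the flat torus $\real^r/\Lambda$ (with $\Lambda$ the relation lattice of the presentation $\z^r\twoheadrightarrow A$), compare $\ell_1^r/\Lambda$ to $\ell_2^r/\Lambda$ via Cauchy--Schwarz, and then invoke the Khot--Naor / Haviv--Regev flat-torus embedding theorem, whose distortion bound $O(r\sqrt{\log r})$ depends only on $r$. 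Your Fourier measure $\mu$ on $\hat A$ is in effect what that theorem constructs; without either reproducing that construction or citing it, step (i) is an unproved claim that subsumes the main technical difficulty.

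Secondly, your transfer step in $(4)\Rightarrow(5)$ is too vague to verify. You say $A_\infty$ is ``determined by finitely many word-level data'' and that these relations produce subgroups $H_n\leq G_n$ with the stated rank and index, but it is not automatic that word equations which encode a subgroup and its coset decomposition in $G_\infty$ yield, when interpreted in $G_n$, a subgroup of the \emph{same} bounded index. The paper's Step 3 (attributed to Breuillard) is cleaner and avoids this: take $F:=\pi^{-1}(H)\leq\mathbb{F}_r$ (finitely generated, index exactly $K$), set $H_n:=\pi_n(F)$, and note $[G_n:H_n]=[\mathbb{F}_r:F\ker\pi_n]\leq[\mathbb{F}_r:F]=K$ for \emph{all} $n$, while commutativity of $H_n$ for large $n$ follows because the finitely many commutators $[w_i,w_j]$ are relations of bounded length that transfer under marked convergence. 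You should adopt this argument (or an equivalent careful version). Finally, you conflate ``$\mathcal{U}$-almost every $n$'' with ``all but finitely many $n$''; both you and the paper pass to a subsequence and would need a short diagonalization/contradiction wrapper to deduce (5) for the full sequence, but this is a minor bookkeeping point.
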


In this conjecture, of course $(1) \implies (2) \& (3)$, and $(2) \implies (4)$ and $(3) \implies (4)$.
The direction $(5) \implies (1)$ is not immediate, since one has to deal with arbitrary generating sets of abelian groups.
Nonetheless, it is true:

\begin{prop}[Naor]\label{AbelImpliesEuclid}
    In the above conjecture, $(5) \implies (1)$.
    \\That is, for each $r \in \n$ and $K \in \n$ there exists $D>1$ such that
    \\\textbf{If} $G$ is a finite group which has an abelian subgroup $H \leq G$ of index $[G:H] \leq K$, and $S=\{s_1,...,s_r\}$ is a generating set of $G$
    \textbf{then} $c_2(G,d_S) \leq D$.
\end{prop}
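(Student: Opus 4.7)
The plan is to reduce to the case of embedding a finite abelian Cayley graph into Hilbert space with distortion depending only on the number of generators, and then extend to $G$ via the coset decomposition.

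\textbf{Reducing $H$ to a normal abelian subgroup with a Schreier generating set.} Pass first to the normal core $N := \bigcap_{g \in G} gHg^{-1}$, which is normal in $G$ of index $[G:N] \leq K!$, and is abelian of rank $\leq r$ as a subgroup of $H$. Replacing $H$ by $N$ and $K$ by $K' := K!$, I assume $H \triangleleft G$ with index $K'$. Choose a right transversal $T = \{t_1 = e, \ldots, t_{K'}\}$ with each $|t_i|_S \leq K' - 1$ (from a BFS tree on the coset graph). Writing $\gamma(g)$ for the transversal representative of $gH$, Schreier's lemma gives the generating set $S_H := \{\gamma(t_is)^{-1} t_i s : t_i \in T,\, s \in S\} \setminus \{e\}$ of $H$, of size $\leq K'r$. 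To handle the $T$-conjugates that appear in the Reidemeister-Schreier rewriting, I enlarge to $S_H' := \bigcup_{t \in T} t S_H t^{-1}$, of size $\leq {K'}^2 r$, each element having $S$-length $\leq 4K' - 3$. Applying the Reidemeister-Schreier identity
$$h = \prod_{i=1}^n \gamma_{i-1} s_i \gamma_i^{-1}, \qquad \gamma_i := \gamma(s_1 \cdots s_i),\; \gamma_0 = \gamma_n = e,$$
to a minimal $S$-word $s_1 \cdots s_n$ representing $h \in H$ writes $h$ as a product of $n$ elements of $S_H'$, yielding $d_{S_H'}(e, h) \leq d_S(e, h)$. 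Combined with $d_S|_H \leq (4K' - 3)\, d_{S_H'}$, this gives a $(4K' - 3)$-bi-Lipschitz equivalence between $(H, d_{S_H'})$ and $(H, d_S|_H)$.

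\textbf{Embedding the abelian Cayley graph.} This is the main technical step. Setting $m := |S_H'| \leq {K'}^2 r$, present $H = \z^m / \Lambda$ with $S_H'$ lifting to the standard basis, so $d_{S_H'}$ is the quotient of $\|\cdot\|_1$ by $\Lambda$. John's theorem applied to the $\ell^1$-unit ball yields an orthonormal change of basis making $d_{S_H'}$ $\sqrt{m}$-bi-Lipschitz to the Euclidean quotient metric $d_2 := \|\cdot\|_2 / \Lambda$. To embed $(H, d_2)$ into $L_2$ with distortion depending only on $m$, I use the Fourier embedding
$$\Phi(h) := \bigl(w_\chi(\chi(h) - 1)\bigr)_{\chi \in \hat H},$$
with weights $w_\chi$ on characters $\chi_\xi$ ($\xi \in \Lambda^*$) concentrated on low frequencies and tuned by a heat-kernel construction on the flat torus $\real^m/\Lambda$ so that $\sum_\chi |w_\chi|^2 |1 - \chi(h)|^2 \asymp d_2(e, h)^2$. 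The resulting distortion depends only on $m$, hence only on $r$ and $K$.

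\textbf{Extending from $H$ to $G$.} Let $\Phi: H \to L_2$ be the embedding just constructed, and define $\Psi : G \to L_2 \oplus \ell_2^{K'}$ by $\Psi(g) := (\Phi(h),\, C\, e_i)$ where $g = t_i h$ and $C := K'$. For the Lipschitz upper bound, each $S$-generator either fixes the coset (so only $\Phi$ changes, by an amount $O(K')$ via the comparison of the first paragraph) or moves to an adjacent coset (contributing $C\sqrt{2} = O(K')$ in the second factor). For the lower bound, if $g, g'$ lie in the same coset, then $d_S(g, g') = d_S(h, h')$ and the abelian embedding combined with the Schreier comparison bounds this above by $\|\Phi(h) - \Phi(h')\|$ up to constants. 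Otherwise, the coset coordinate already gives $\|\Psi(g) - \Psi(g')\| \geq C\sqrt{2}$, covering all small distances; for large $d_S(g, g')$, the triangle estimate $d_S(h, h') \leq d_S(g, g') + 2(K' - 1)$ together with the abelian embedding bounds $\|\Phi(h) - \Phi(h')\|$ from below by $d_S(g, g')$ up to constants. Choosing constants carefully yields $c_2(G, d_S) \leq D(r, K)$.

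The \textbf{main obstacle} is the abelian case (second paragraph): producing a Hilbert-space embedding of a general finite abelian Cayley graph whose distortion depends only on the size of the generating set, not on the group order or internal structure. The reduction to $H$ normal via its core and the extension by direct sum with a coset indicator are standard; the slight enlargement $S_H \to S_H'$ in the Reidemeister-Schreier step is needed because $H$ being abelian does not force the $T$-action on $H$ to be trivial.
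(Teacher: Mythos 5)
Your overall architecture matches the paper's: compare the word metric on the abelian subgroup to the $\ell_2$-quotient metric by a lattice, embed that flat-torus metric into Hilbert space, and then extend to $G$ by a direct sum with a coset-indicator map, using a Milnor--Schwarz/Schreier-type comparison to control the subgroup's intrinsic metric against the ambient one. The coset-extension step (your third paragraph) is essentially what the paper does, with the paper taking $T' := H \cap \{k_{j_1}^{-1} s_i k_{j_2}\}$ and proving $|h|_{T'} \leq |h|_T \leq (2[G:H]+1)|h|_{T'}$.

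The serious gap is your second paragraph, and you flag it yourself as "the main obstacle." You assert that weights $w_\chi$ can be "tuned by a heat-kernel construction" so that $\sum_\chi |w_\chi|^2|1-\chi(h)|^2 \asymp d_2(e,h)^2$ with constants depending only on $m$, but you do not prove it, and this is precisely the nontrivial content of the result. A single-scale heat kernel only captures distances at one scale; getting a bi-Lipschitz bound requires a multiscale construction, and the real difficulty is that the injectivity radius and diameter of $\real^m/\Lambda$ depend on $\Lambda$ in ways that can make their ratio arbitrarily large, so it is genuinely subtle that the distortion can be bounded in terms of $m$ alone. The paper handles this by directly invoking Theorem~5.8 of Khot--Naor (flat tori $\real^m/\Lambda$ embed into $L_2$ with distortion $O(m^{3m/2})$), noting the Haviv--Regev improvement to $O(m\sqrt{\log m})$. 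Your proposal needs either this citation or an actual proof of the weighted Fourier estimate; as written it assumes the theorem it sets out to prove.

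Two smaller remarks. The normal-core reduction ($H \to N := \bigcap_g gHg^{-1}$, index inflated to $K!$) and the enlargement $S_H \to S_H' := \bigcup_t tS_Ht^{-1}$ are both unnecessary: the Reidemeister--Schreier rewriting with a right-coset transversal already produces factors $\gamma_{i-1}s_i\gamma_i^{-1}$ that lie in $H$ and are Schreier generators, so no $T$-conjugates appear and normality is not needed. The paper sidesteps all of this by intersecting with $H$ in the definition of $T'$, which works for non-normal $H$ directly. Also, the appeal to John's theorem and an "orthonormal change of basis" to compare $\ell_1$ and $\ell_2$ is overkill: the Cauchy--Schwarz inequality $\|x\|_2 \leq \|x\|_1 \leq \sqrt{m}\|x\|_2$ already gives the $\sqrt{m}$ factor, which is what the paper uses.
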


Since $L_2$ embeds bi-Lipschitzly into $L_1$, 
Proposition \ref{AbelImpliesEuclid} implies that abelian groups of bounded rank exhibit Always-$L_1$ behavior.
Also, the proof below gives the dependence $D \lesssim_K r \log r$.
It would be interesting to know the optimal dependence on $r$ in Proposition \ref{AbelImpliesEuclid} (see also Conjecture \ref{ConjectureFlatL_1ToriIntoL_1} below).

\begin{proof}
    \textbf{CASE 1: $G$ is abelian}
    \\The word metric of $y \in G$ is given by:
    $$d(y,0) = \min \{\sum_{i=1}^r|k_i|: y = \sum_{i=1}^r k_i s_i,\;\;\;k_1,...,k_r \in \z\}.$$
    Consider $\Lambda := kernel\{k \mapsto \;\; \sum_{i=1}^r k_i s_i\}$
    which is the kernel of a map $\z^r \to G$ and a lattice of rank $\leq r$.
    We write our group as the direct product of cyclic ones $G = \oplus_{j=1}^J \z_{m_j}$ and pick canonical generators $e_1,...,e_J$, where $e_{j} = (\delta_{jj'})_{j'=1}^J$ is a generator for $\z/m_j$.
    We write each of the canonical generators in terms of the generators $s_1,...,s_r$: there exist $(a_i^{(j)})_{i=1}^r \in \z^r$ for each $j \in [J]$ such that
    $$e_j = \sum_{i=1}^r a_i^{(j)}s_i\;\;\;for\;all\;j \in [J].$$
    That way, given any $g = \sum_{j=1}^J g_j e_j\in G$, we have a representation of $g$ as the sum of the generators $s_1,...,s_r$:
    $$g = \sum_{i=1}^r \left(\sum_{j=1}^J g_j a_i^{(j)}\right) s_i.$$
    Now, if we have any element $(k_1,...,k_r) \in \Lambda$, then we get another representation of $g$,
    $$g = \sum_{i=1}^r \left(k_i + \sum_{j=1}^J g_j a_i^{(j)}\right) s_i.$$
    Conversely, for any two representations of $g$
    $$g = \sum_{i=1}^r k_i s_i = \sum_{i=1}^r k_i' s_i \implies (k_i-k_i')_{i=1}^r \in \Lambda.$$
    Writing $(Ag)_i:=\sum_{j=1}^J g_j a_i^{(j)}$ for each $i \in [r]$, we conclude that the word metric is given by:
    $$d_{word}(g,0) = \min_{k \in \Lambda} \sum_{i=1}^r |k_i - \sum_{j=1}^J g_j a_i^{(j)}| = \min_{k \in \Lambda} ||k - Ag||_{l_1^r}.$$
    
    Form the \textbf{flat torus} $\real^r/\Lambda$ which is a Riemannian manifold (with universal cover $\real^r$) and the Riemannian metric is inherited from the standard Riemannian metric on the cover $\real^r$.
    The metric on the flat torus is given by the formula
    $$d_{\real^r/\Lambda}(x + \Lambda,y + \Lambda) := \min_{z \in \Lambda} ||x - y + z||_{l_2^r},$$
    so by the Cauchy-Schwarz $\leq$ and the above observation, we get that for all $g_1,g_2 \in G$,
    $$d_{\real^r/\Lambda}(Ag_1 + \Lambda,Ag_2 + \Lambda) \leq d_{word}(g_1,g_2)\leq \sqrt{r} \;d_{\real^r/\Lambda}(Ag_1 + \Lambda,Ag_2 + \Lambda).$$
    By Theorem 5.8 in Khot-Naor \cite{khot2006nonembeddability},
    any flat torus $(\real^r/\Lambda, d_{\real^r/\Lambda})$ embeds into $L_2$ with distortion $O(r^{3r/2})$. 
    The dependence on $r$ was improved to $r \sqrt{\log r}$ by Haviv-Regev \cite{haviv2013euclidean} and subsequently to $\sqrt{r \log r}$ by Agarwal-Regev-Tang \cite{agarwal2020nearly}.
    Say the map $f: \real^r/\Lambda \to L_2$ achieves this distortion bound.
    Then the map 
    $$G \to L_2: y \mapsto f(Ay + \Lambda)$$
    is an embedding of distortion $O_r(1)$, where the distortion does NOT depend on $G$, nor on the generators $s_1,...,s_r$.
    \\\textbf{CASE 2: Passing to finite extensions}
    \\We will essentially replicate the proof of the Milnor–Schwarz lemma (see \cite{dructu2018geometric} for instance), and observe that all the involved constants can be bounded by some function of $[G:H]$ (and hence are independent of $G$ and of the generators $s_1,...,s_r$).
    
    The Schreier graph $\Gamma(G/H,S)$ is connected and has $[G:H]$-many points, hence its diameter at most $[G:H]$.
    This means that we may pick representatives for each coset $k_1,...,k_J$ (where $J = [G:H]$) such that
    $$|k_i|_{T} \leq [G:H]\;\;\;and\;\;\cup_{j=1}^J k_j H = G.$$
    Consider the following subset of $H$:
    $$T' := H \cap \{k_{j_{1}}^{-1} s_i k_{j_2}:\; j_1,j_2 \in [J]\;\;and\;\;i \in [r]\}.$$
    We claim that $T'$ generates $H$ and, moreover, the word metric of $T'$ is "undistorted" with respect to the word metric of $T$:
    $$|h|_{T'} \leq |h|_{T} \leq (2[G:H] +1) |h|_{T'}\;\;\;for\;all\;h \in H.$$
    Fix any geodesic of length $M \in \n$ from $1$ to $h \in H$ with generators in $T$, 
    $$1 \to g_1 \to g_2 \to ... \to g_M = h$$
    so $g_{m+1}g_{m}^{-1} = s_{i_m}$ for some $i_m \in [r]$.
    We may express via coset representatives $g_m = k_{j_m} h_m$ where $h_m \in H$ and $j_m \in [J]$.
    Now we have for each $m \in [M]$:
    $$h_{m+1} = k_{j_{m+1}}^{-1} s_i k_{j_m} h_m.$$
    This means that $k_{j_{m+1}}^{-1} s_i k_{j_m} \in H$ and moreover the path
    $$1 \to h_1 \to h_2 \to ... \to h_M = h$$
    is a path from $1$ to $h$ in the Cayley graph of $H$ generated by $T'$.
    This shows that $<T'> = H$ and moreover, $|h|_{T'} \leq |h|_{T}$.
    On the other hand, since $|k_{j_{1}}^{-1} x_i k_{j_2}|_T \leq 2[G:H] +1$, we get that:
    $$|h|_{T'} \leq |h|_{T} \leq (2[G:H] +1)|h|_{T'}\;\;\;for\;all\;h \in H.$$
    Finally, we construct the embedding. 
    From CASE 1, we can find $f: H \to L_2$ with distortion $O_r(1)$.
    Let $([J],d_{trivial})$ be the trivial metric on $[J]$, where any two distinct points have distance $1$.
    (This metric embeds isometrically into Hilbert space as the vertices of the standard simplex.)
    Map $F: G \to L_2 \times [J]: k_j h \to (f(h),j)$.
    We check that for each $k_j h, k_{j'}h' \in G$.
    $$\dfrac{1}{2[G:H]+1}d_{H,T'}(h,h') + 1_{\{j=j'\}}\leq d_{G,T}(k_j h, k_{j'}h') \leq [G:H]1_{\{j=j'\}} + d_{H,T'}(h,h')$$
    and hence we get an embedding of $G$ into $L_2$ whose distortion depends only on $r$ and $[G:H]$.
\end{proof}

One of the main results of Khot-Naor \cite{khot2006nonembeddability}, answering a question of W. B. Johnson,
is that there exists a lattice $\Lambda \leq \real^r$ such that $c_1(l_2^r/\Lambda) \gtrsim \sqrt{r}$.
Case 1 in the proof of Proposition \ref{AbelImpliesEuclid} yields also the following strengthening of this result.

\begin{prop}[Naor]\label{PropositionL_1Quotients}
    For every $r \in \n$ there exists a lattice $\Lambda \subset \real^r$ such that
    $c_1(l_1^r / \Lambda) \gtrsim r$, where the metric on $l_1^r / \Lambda$ is given by:
    $d(x + \Lambda, y + \Lambda) := \inf_{k \in \Lambda } ||x-y+k||_1$ for all $x,y \in \real^r$.
    \\By Cauchy-Schwarz, this implies that $c_1(l_2^r / \Lambda) \gtrsim \sqrt{r}$.
\end{prop}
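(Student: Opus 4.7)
The strategy is to combine the identification in Case~1 of Proposition~\ref{AbelImpliesEuclid} between abelian Cayley graph metrics and $\ell_1^r/\Lambda$ quotient metrics, with an $L_1$-refinement of Khot--Naor's Poincar\'e-type argument. By Case~1 in the proof of Proposition~\ref{AbelImpliesEuclid}, for any full-rank sublattice $\Lambda \subseteq \z^r$ the word metric on the finite abelian group $G := \z^r/\Lambda$, generated by the images $S := \{\bar e_1, \ldots, \bar e_r\}$ of the standard basis, coincides with the restriction of $d_{\ell_1^r/\Lambda}$ to the image of $\z^r$. Since $L_1$-distortion is monotone under isometric inclusion, $c_1(\ell_1^r/\Lambda) \geq c_1(G, d_S)$, so it suffices to exhibit a lattice for which $c_1(G, d_S) \gtrsim r$.

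Take $\Lambda$ to be (a suitable integer scaling of) the lattice appearing in Khot--Naor's proof of their Theorem~5.8. The heart of their argument is a Poincar\'e-type inequality on the Cayley graph $(G, S)$ which, for $L_1$-valued $f$, takes the form
\[
\e_{g_1, g_2 \in G}\|f(g_1) - f(g_2)\|_1 \;\leq\; K \cdot \e_{g \in G,\, i \in [r]}\|f(g) - f(g + \bar e_i)\|_1
\]
for a universal constant $K$ (the $L_1$-valued version reduces to the scalar $L_1$-Cheeger inequality by a level-set decomposition of $f$, and the scalar version follows from the Khot--Naor $L_2$-Poincar\'e via Cheeger's inequality). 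For any embedding $f \colon (G, d_S) \to L_1$ of distortion $D$, normalizing $f$ to be $1$-Lipschitz makes the right-hand side at most $K$; the left-hand side is bounded below by $\tfrac{1}{D}\,\e_{g_1, g_2} d_S(g_1, g_2)$, so $D \geq \e_{g_1, g_2} d_S(g_1, g_2) / K$.

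The new input beyond Khot--Naor is the observation that for their (isotropic random-code) lattice, the average word length $\e_{g_1, g_2} d_S(g_1, g_2)$ in $G$ is of order $r$, not merely $\sqrt{r}$: a typical coset representative in the fundamental domain of $\Lambda$ has each of its $r$ coordinates of size $\Theta(1)$, so its $\ell_1^r/\Lambda$-norm (equivalently its word length) is $\Theta(r)$, whereas Khot--Naor only exploited the $\ell_2^r/\Lambda$-norm $\Theta(\sqrt{r})$. Substituting into the Poincar\'e bound gives $D \gtrsim r$, proving $c_1(\ell_1^r/\Lambda) \gtrsim r$. The Cauchy--Schwarz comparison $d_{\ell_1} \leq \sqrt{r}\, d_{\ell_2}$ then recovers the $\sqrt{r}$ bound for $\ell_2^r/\Lambda$.

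The main obstacle is technical rather than conceptual: one must verify that Khot--Naor's Poincar\'e inequality, originally formulated on the continuous $\ell_2$-torus, persists in the discrete $L_1$-valued form above with a dimension-independent constant $K$, and that for their lattice the average $\ell_1$-word length in $G$ is indeed $\Theta(r)$. The first reduction is standard (level-set decomposition plus Cheeger's inequality), while the second is an isotropy statement for the random code construction underlying the lattice.
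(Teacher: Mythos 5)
Your strategy is essentially the one the paper uses: realize the random Cayley graph $\Gamma(G,S)$ of $G := (\z/2)^{\Theta(r)}$ with $r$ random generators as an isometric subset of $\ell_1^r/\Lambda$ via $\Lambda := \ker(\z^r \twoheadrightarrow G)$, then lower-bound $c_1$ by an $L_1$-Poincar\'e/Cheeger inequality combined with an estimate that the average word distance is $\gtrsim r$. Both you and the paper use the same random-code lattice; the paper gets the Cheeger constant directly from Alon--Roichman rather than from Khot--Naor. On that note, Theorem~5.8 of Khot--Naor is their \emph{positive} embedding result $c_2(\real^r/\Lambda) = O(r^{3r/2})$, not the nonembedding side, so that reference should be fixed; and your detour of deriving the $L_1$-Poincar\'e from the $L_2$-one via Cheeger plus level-set decomposition is sound but roundabout — the paper simply cites the $L_1$-Cheeger inequality for graphs (Theorem~4.7 in Ostrovskii's book).

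There is one genuine soft spot. For the estimate $\e_{g_1,g_2}d_S(g_1,g_2) \gtrsim r$, your justification ("a typical fundamental-domain representative has each of its $r$ coordinates of size $\Theta(1)$") doesn't actually yield the bound: the $\ell_1^r/\Lambda$-norm is a \emph{minimum} over all lattice translates, not the $\ell_1$-norm of a fixed representative, and since $2\z^r \subseteq \Lambda$ here \emph{every} coset has a representative in $\{-1,0,1\}^r$ with all coordinates $O(1)$ — that alone gives an upper bound $O(r)$, not a lower bound. What you actually need is the covering-radius/volume count for the associated random binary code $C := \Lambda/2\z^r \subset \{0,1\}^r$: for $x \in \{0,1\}^r$ the $\ell_1^r/\Lambda$-norm of $x + \Lambda$ equals the Hamming distance from $x$ to $C$, and since $|C| = 2^{(1-c)r}$ while Hamming balls of radius $\alpha r$ have size $2^{H(\alpha)r + o(r)}$, most $x$ lie at distance $\gtrsim_c r$ from $C$. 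This is precisely the diameter/average-distance counting the paper delegates to Propositions~3.4 and 3.5 of Newman--Rabinovich. Once you swap your isotropy heuristic for this counting argument, the proof closes exactly as in the paper.
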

\begin{proof}
    The proof is probabilistic.
    Recall the following special case of the Alon-Roichman theorem \cite{alon1994random}.
    There exists a constant $c>0$ for which
    we sample $r$ iid elements $S = \{s_1,...,s_r\}$ uniformly at random from $G = (\z/2)^{\lfloor cr\rfloor}$.
    Then with probability $>1/2$, the Cayley graph $\Gamma(G,S)$ has Cheeger constant $>1/10$.
    \\Consider the map $\z^r \to G: (k_i)_{i=1}^r \mapsto \sum_{i=1}^n k_i s_i$ and its kernel $\Lambda:= kernel(k \mapsto \sum_{s \in S} k_s s)$ which is a random lattice $\Lambda \leq \real^r$.
    As we saw in Case 1 in the proof of Proposition \ref{AbelImpliesEuclid},
    the shortest path metric on $\Gamma(G,S)$ embeds isometrically into the $l_1$-torus $l_1^r/\Lambda$.
    We show $c_1(\Gamma(G,S))$ is large.
    \\By Propositions 3.4 and 3.5 in Newmann-Rabinovich \cite{newman2009hard} we have 
    $$\dfrac{1}{|G|^2}\sum_{g \in G} \sum_{h \in G} d_{word}(x,y) \geq \dfrac{diam(\Gamma(G,S))}{2}-1 \gtrsim r.$$
    (The first estimate is immediate: consider the smallest $r$ such that $B|(1,r)| > |G|/2$. Then $LHS \geq r-1 \geq diam(\Gamma(G,S))/2-1$. The second estimate follows from counting all possible small geodesics in an abelian group; see \cite{newman2009hard} for details.)
    \\Fix any $D>1$ and $f: \Gamma(G,S) \to L_1$ with $d(g,h) \leq ||f(g)-f(h)||_1 \leq D d(g,h)$ for all $g,h \in G$.
    By the $L_1$-Poincare inequality for the Cheeger constant (i.e. Theorem 4.7 in \cite{ostrovskii2013metric}), we have with probability $> 1/2$ that
    $$r \lesssim \dfrac{1}{|G|^2}\sum_{g \in G} \sum_{h \in G} ||f(g) - f(h)||_1 \leq \dfrac{20}{|E(\Gamma)|} \sum_{(g,h) \in E(\Gamma)} ||f(g) - f(h)||_1 \leq 20 D.$$
    We conclude that 
    $c_1(l_1^r/\Lambda) \geq c_1(\Gamma(G,S)) \gtrsim r$ with probability $>1/2$.
\end{proof}

By the theorem of Agarwal-Regev-Tang \cite{agarwal2020nearly}, for any lattice $\Lambda \subset \real^r$ we have
$$c_1(l_1^r / \Lambda) \leq \sqrt{r} \;c_1(l_2^r / \Lambda)\leq \sqrt{r} \sqrt{r \log r} = r \sqrt{\log r},$$
hence Proposition \ref{PropositionL_1Quotients} is sharp up to logarithmic factors.
\begin{conj}[Naor]\label{ConjectureFlatL_1ToriIntoL_1}
    For any $r\in \n$ and any lattice $\Lambda \subset \real^n$, 
    $c_1(l_1^r / \Lambda) \lesssim r.$
\end{conj}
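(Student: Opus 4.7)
The plan is to adapt the theta-function embedding of Haviv--Regev, which gives $c_2(l_2^r/\Lambda) \lesssim r\sqrt{\log r}$ using Gaussian coefficients $\exp(-\pi t^2\|y\|_2^2)$ on the dual lattice $\Lambda^*$, to the $L_1$ setting. The trivial composition of their embedding with $L_2 \hookrightarrow L_1$ and the norm comparison $d_{l_1/\Lambda} \leq \sqrt{r}\, d_{l_2/\Lambda}$ yields only $r^{3/2}\sqrt{\log r}$, so the $\sqrt{r}$ improvement demanded by the conjecture forces one to exploit the $L_1$-target directly rather than passing through Hilbert space.

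First I would replace the Gaussian weights by Laplace-type coefficients $c_t(y) = \prod_{i=1}^{r}(1+4\pi^2 t^2 y_i^2)^{-1}$, i.e., the Fourier coefficients (in the convention $\hat{f}(\xi)=\int f(x)e^{-2\pi i x\xi}dx$) of the product Laplace density $\rho_t(x) = (2t)^{-r}\prod_i e^{-|x_i|/t}$. On the non-quotiented space $\real^r$, the kernel $1 - e^{-\|x-y\|_1/t} = 1 - \prod_i e^{-|x_i - y_i|/t}$ is a concave function of $\|x-y\|_1$ and hence $L_1$-embeddable via a standard cut decomposition. Its lattice-periodization
\[
K_t(x,y) \;=\; \frac{\sum_{k \in \Lambda} \prod_i e^{-|x_i-y_i+k_i|/t}}{\sum_{k \in \Lambda} \prod_i e^{-|k_i|/t}}
\]
should similarly admit an $L_1$-valued lift $f_t: \real^r/\Lambda \to L_1$ with $\|f_t(x) - f_t(y)\|_1 \asymp 1 - K_t(x,y)$ on its natural scale. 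Taking the $L_1$-direct sum of the $f_t$ over dyadic scales $t=2^j$, after a preliminary $l_1$-analogue of LLL reduction of $\Lambda$ that bounds the number of relevant scales by $O(r\log r)$, should aggregate to the conjectured distortion $O(r(\log r)^c)$.

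The \textbf{main obstacle} is the lattice-theoretic input: one needs a sharp $l_1$-analogue of the Gaussian smoothing lemma, bounding the tail $\sum_{k \in \Lambda \setminus\{0\}} \prod_i e^{-|k_i|/t}$ in terms of an $l_1$-version of the shortest vector of the dual lattice $\Lambda^*$, uniformly enough that $K_t(x,y) \approx \exp(-\|x-y\|_1/t)$ on the scale $t$. The $L_2$-theory relies crucially on the Fourier self-duality of the Gaussian, which fails for Cauchy/Laplace products: the Fourier transform of a product of Cauchy functions is a product of Laplace functions and vice versa, so iterating Poisson summation does not close up and Banaszczyk-style transference bounds do not transfer mechanically. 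Overcoming this will likely require either new Poisson-summation identities tailored to products of one-dimensional Cauchy functions, or a careful comparison argument relating $l_1$-theta sums to rescaled Gaussian theta sums so as to borrow existing transference results. I expect this harmonic-analytic step, and not the embedding construction itself, to be the genuinely new ingredient the conjecture demands, and the reason it remains open.
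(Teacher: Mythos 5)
This statement is not proved in the paper; it is stated as an open conjecture (attributed to Naor), and the surrounding text only remarks that a positive answer would make Proposition~\ref{PropositionL_1Quotients} sharp up to logarithmic factors and that the logarithmic factors might be removable. So there is no paper proof to compare against. Your write-up correctly recognizes this and is explicitly a research sketch rather than a proof: you propose replacing the Gaussian theta kernel of Haviv--Regev by a periodized product-Laplace/Cauchy kernel, exploiting the fact that $1-e^{-\|x-y\|_1/t}$ is negative-type (hence $L_1$-embeddable) on the non-quotiented space, summing over $O(r\log r)$ dyadic scales after an $l_1$ lattice reduction, and you identify the missing ingredient as an $l_1$-analogue of the Gaussian smoothing/Banaszczyk transference machinery. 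That gap is real: the self-dual Fourier structure of the Gaussian is exactly what makes the $L_2$ argument close, and, as you note, Cauchy and Laplace swap under the Fourier transform so Poisson summation does not iterate cleanly. The sketch is a reasonable and honest account of why the conjecture is open, but it is not a proof and should not be mistaken for one; in particular the claim that $K_t$ ``should similarly admit an $L_1$-valued lift'' with the stated two-sided bound is precisely the unresolved step, since negative-definiteness of $1-K_t$ on the torus does not follow formally from negative-definiteness of the unperiodized kernel on $\real^r$.
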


.\\We return to bi-Lipschitz embeddings of groups into Hilbert space.

\begin{prop}[A Reduction]\label{Reduction}
    .\\Assuming conjecture 1, the direction $(4) \implies (5)$ in conjecture 2 follows, that is:
    \\\textbf{If} $\{G_n\}_n$ are finite groups and $\{S_n\}_n$ generating sets of size $=r$ with $\sup_n c_2(G_n,d_{S_n}) < \infty$,
    \\\textbf{then} each $G_n$ has an abelian subgroup $H_n$ such that $\sup_n [G_n:H_n] < \infty$.
\end{prop}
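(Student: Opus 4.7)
The plan is to argue by contradiction, extract a limit in the Grigorchuk space of marked groups, apply Conjecture \ref{AbelMeetEuclidInfinite} to that limit, and then transfer virtual abelianness back to the $G_n$'s via finite presentability.

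Suppose the conclusion fails, so that after passing to a subsequence still called $\{(G_n,S_n)\}$,
\[
K_n := \min\{[G_n:H] : H \leq G_n \text{ abelian}\} \longrightarrow \infty.
\]
I fix an ordering of each $S_n$ and view $(G_n, S_n)$ as an $r$-marked group. The space of $r$-marked groups with the Grigorchuk topology is compact, so after a further subsequence I may assume $(G_n,S_n) \to (G_\infty,S_\infty)$ for some (possibly infinite) marked group $(G_\infty,S_\infty)$.

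I next verify that $c_2(G_\infty, d_{S_\infty}) < \infty$. Let $D := \sup_n c_2(G_n, d_{S_n}) < \infty$ and pick embeddings $f_n: G_n \to L_2$ with $f_n(e) = 0$ and distortion at most $2D$. For each fixed radius $R$, marked-group convergence supplies, for all large $n$, a distance-preserving labeled bijection $B_{G_n}(e,R) \leftrightarrow B_{G_\infty}(e,R)$. Each such ball in $G_\infty$ is finite, so the images $f_n(B_{G_n}(e,R))$ lie in a finite-dimensional affine subspace of $L_2$ of bounded diameter, where strong compactness applies. Diagonalizing over an exhaustion of $G_\infty$ by balls then extracts a $(2D)$-bi-Lipschitz embedding $G_\infty \hookrightarrow L_2$. (Equivalently, I invoke the folklore fact that if every finite subset of a separable metric space embeds into $L_2$ with distortion $\leq D$, then so does the whole space.) Hence $c_2(G_\infty, d_{S_\infty}) < \infty$, and Conjecture \ref{AbelMeetEuclidInfinite} supplies an abelian subgroup $H_\infty \leq G_\infty$ of finite index $K := [G_\infty : H_\infty]$.

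The last step is to push virtual abelianness back to $G_n$. Every finitely generated virtually abelian group is finitely presented, so I fix a finite presentation $G_\infty = \langle S_\infty \mid r_1, \ldots, r_m \rangle$. Each relator $r_i$ represents the identity in $G_\infty$, hence by marked-group convergence also in $G_n$ for $n$ large. Von Dyck's theorem then provides a surjective homomorphism $\varphi_n : G_\infty \twoheadrightarrow G_n$ with $\varphi_n(S_\infty) = S_n$, and $\varphi_n(H_\infty) \leq G_n$ is abelian with $[G_n : \varphi_n(H_\infty)] \leq K$, a bound independent of $n$. This contradicts $K_n \to \infty$ and completes the reduction.

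The hard part is the semicontinuity step in the second paragraph: passing the bi-Lipschitz bound to the marked-group limit. Weak limits in $L_2$ can strictly shrink distances, so the argument hinges on each ball $B_{G_\infty}(e,R)$ being finite, which forces the relevant images of $f_n$ into a finite-dimensional subspace where strong norm convergence (and hence exact preservation of both Lipschitz bounds) is automatic. The remaining ingredients---compactness of the space of marked groups, finite presentability of virtually abelian groups, and von Dyck's theorem---are classical.
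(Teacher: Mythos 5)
Your proof is correct, and its overall architecture (compactness of the space of marked groups, extraction of a limit marked group, passing the uniform $L_2$-embeddability to the limit by a finite-ball diagonalization, invoking Conjecture~\ref{AbelMeetEuclidInfinite} on the limit, then transporting virtual abelianness back to $G_n$) is the same as the paper's. The one place where you take a genuinely different route is the last step of the transfer. The paper (following Breuillard) forms the preimage $F = \pi^{-1}(H_\infty)$ in the free group $\mathbb{F}_r$, uses finite generation of finite-index subgroups to pick a finite generating set $\{w_i\}$ of $F$, and observes that the commutator relations $[w_i,w_j]=1$ are words of bounded length which, by marked-group convergence, eventually hold in $G_n$; the subgroups $\pi_n(F)\le G_n$ are then abelian of index $\le K$ for large $n$. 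You instead use the fact that a finitely generated virtually abelian group is finitely presented, get a surjection $\varphi_n: G_\infty \twoheadrightarrow G_n$ from Von Dyck's theorem once all the (finitely many) relators eventually hold in $G_n$, and take $H_n := \varphi_n(H_\infty)$. Both arguments are sound; yours needs the extra (standard) input of finite presentability of virtually abelian groups but is perhaps cleaner, while the paper's argument avoids that input by working only with the finitely many commutator words. Two small further remarks: your contradiction framing (choosing a subsequence along which $K_n\to\infty$) actually tightens the logic, since the paper passes to a convergent subsequence without explaining why a bound along a subsequence suffices for the full sequence; and your ``folklore fact'' in the semicontinuity step is, in the paper, Ostrovskii's theorem on locally finite metric spaces \cite{ostrovskii2012embeddability} — your hands-on finite-ball diagonalization proves exactly the locally finite case you need, which is all that is required here.
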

\begin{proof}
    \textbf{STEP 1: Compactness and convergence to a limit group}:
    \\We recall the definition of \textbf{Grigorchuk's space of marked groups} $\mathcal{M}_r$ (see Section 6 in \cite{grigorchuk1985degrees}).
    The points of $\mathcal{M}_r$ consist of pairs $(G,S)$ where $G$ is a group, and $S$ is an ordered generating set of size $=r$.
    Every such pair $(G,S)$ is in one-to-one correspondence with the kernel $\ker(\pi)$ of the unique canonical epimorphism from the free group to $G$, $\pi:\mathbb{F}_r \twoheadrightarrow G$, which preserves the order of the generators.
    Each subgroup can be viewed as a subset of $\mathbb{F}_r$, i.e. $\ker(\pi) \in \{0,1\}^{\mathbb{F}_r}$.
    \\The topology of $\mathcal{M}_r$ is the subspace topology inherited from the Tychonoff topology on $\{0,1\}^{\mathbb{F}_r}$.
    One checks that $\mathcal{M}_r$ is a compact topological space (see Proposition 6.1 in \cite{grigorchuk1985degrees}) and that this topology is metrizable (see end of page 286 in \cite{grigorchuk1985degrees}).
    \\For each $n$, order the generators $S_n = \{s_1,...,s_r\}$ in any manner. Consider the sequence of points in the space of marked groups $\{(G_n,S_n)\}_n \subset \mathcal{M}_r$ and pick a convergent subsequence $\{(G_{n_k},S_{n_k})\}_k$ which we relabel and denote by $\{(G_n,S_n)\}_n$ for the shake of notational convenience.
    \\\textbf{STEP 2: The limit group must be virtually abelian}
    \\For every $R \in \n$, there exists $n_0$ such that for all $n>n_0$, $(G_n,S_n)$ and $(G,S)$ satisfy the same relations of length $\leq R$.
    This implies that the ball of radius $R/2$ in the Cayley graph $\Gamma(G_n,S_n)$, is graph-isomorphic to the ball of radius $R/2$ in $\Gamma(G,S)$ and consequently, the word metric on the ball of radius $R/4$ in $\Gamma(G_n,S_n)$ is isometric to that of $\Gamma(G,S)$.
    By the hypothesis, we conclude that every finite subset of $\Gamma(G,S)$ embeds into Hilbert space with distortion $O(1)$.
    \\A theorem of Ostrovskii \cite{ostrovskii2012embeddability} states that for every Banach space $X$ and every locally finite metric space $(M,d)$ (i.e. a space where each ball has finitely many points), if every finite subset of $M$ embeds into $X$ with bi-Lipschitz distortion $\leq D$ for some $D \in (0,\infty)$, then the entire metric space $(M,d)$ embeds with bi-Lipschitz distortion $\leq CD$ where $C\in (0,\infty)$ is a universal constant.
    It follows that the Cayley graph $\Gamma(G,S)$ embeds bi-Lipschiztly into Hilbert space.
    \\A positive answer to Conjecture \ref{AbelMeetEuclidInfinite} would imply that $G$ has an abelian subgroup $H\leq G$ of finite index which we denote by $K := [G:H] < \infty$.
    \\\textbf{STEP 3: The finite groups must also be virtually abelian.} 
    \\This step of the proof was shown to us by Emmanuel Breuillard.
    \\Consider the canonical projection homomorphism $\pi: \mathbb{F}_r \twoheadrightarrow G$.
    Recall that $G \cong \mathbb{F}_r/\ker(\pi)$.
    Consider the subgroup $F := \pi^{-1}(H)$ which has index 
    $$[\mathbb{F}_r:F] = [\mathbb{F}_r/\ker \pi:F/\ker\pi] = [G:H] = K.$$
    By the Milnor–Schwarz lemma (see \cite{dructu2018geometric}) every finite index subgroup of a finitely generated group is finitely generated.
    Pick a finite generating set $F = <w_1,...,w_{r'}>$ (where $r' \in \n$ and $\{w_1,...,w_{r'}\}$ are words in $S \cup S^{-1}$).
    \\Each $G_n$, has its canonical projection map $\pi_n: \mathbb{F}_r \twoheadrightarrow G_n$.
    Define the subgroup: $H_n := \pi_n(F)$.
    \\We have $H_n \cong F/(F \cap \ker \pi_n) \cong (F \ker \pi_n)/ \ker \pi_n$ and the index bound:
    $$[G_n : H_n] = \left[\dfrac{\mathbb{F}_r}{\ker(\pi_n)} : \dfrac{F \ker(\pi_n)}{\ker(\pi_n)}\right] = [\mathbb{F}_r: F \ker(\pi_n)] \leq [\mathbb{F}_r: F] = K.$$
    Since $H$ is abelian, $\pi([w_i,w_j]) = 1$ for all  $i,j \in [r']$.
    There exists $R\in \n$ such that each word $[w_i,w_j]$ ($i,j \in [r']$) has length $< R$.
    By the convergence $(G_n,S_n) \to (G,S)$, there exists $n_0$ such that for all $n > n_0$,
    $(G_n,S_n)$ and $(G,S)$ satisfy the same relation of length $\leq R$,
    and hence
    $$[\pi_n(w_i), \pi_n(w_j)] = \pi_n([w_i,w_j]) = 1 \;\;\;for\;all\;i,j \in [r'].$$
    Therefore subgroup $H_n \leq G_n$ is abelian for all $n>n_0$.
\end{proof}

Combining Proposition \ref{AbelImpliesEuclid} and Proposition \ref{Reduction}, we get:

\begin{coro}
    The conjecture of Cornulier-Tessera-Valette (Conjecture \ref{AbelMeetEuclidInfinite}) \\implies the finite version of Naor (Conjecture \ref{AbelMeetEuclidFinite}).
\end{coro}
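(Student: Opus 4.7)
The plan is to observe that the corollary is a direct combination of Propositions \ref{AbelImpliesEuclid} and \ref{Reduction} with the trivial quantifier-weakening implications among the five conditions of Conjecture \ref{AbelMeetEuclidFinite}. First I would collect the unconditional chain of easy implications. The implication $(1) \Rightarrow (2)$ holds by specializing the universal quantifier on $r$ to a single value; $(1) \Rightarrow (3)$ holds because $\inf_S c_2 \leq \sup_S c_2$, so a uniform bound on the sup transfers to the inf; $(2) \Rightarrow (4)$ and $(3) \Rightarrow (4)$ hold for the same two reasons combined (weakening the sup to an inf and weakening the universal over $r$ to an existential). None of these four implications requires any geometric input.

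The two nontrivial implications sit at the endpoints of the cycle. Proposition \ref{AbelImpliesEuclid} supplies $(5) \Rightarrow (1)$ unconditionally, via the flat-torus embedding of arbitrary abelian quotients of $\mathbb{Z}^r$ into $L_2$ and the finite-index extension argument modeled on Milnor--Schwarz. The only remaining implication is $(4) \Rightarrow (5)$, and this is exactly the content of Proposition \ref{Reduction}: starting from generating sets $S_n \subset G_n$ of common size $r$ with $\sup_n c_2(G_n, d_{S_n}) < \infty$, one extracts a convergent subsequence $(G_n, S_n) \to (G, S)$ in Grigorchuk's space of marked groups, invokes Ostrovskii's locally finite extension theorem to conclude that the limit group $G$ itself embeds bi-Lipschitzly into Hilbert space, applies Conjecture \ref{AbelMeetEuclidInfinite} to deduce that $G$ is virtually abelian, and then transports this virtually abelian structure back along the tail of the sequence using the fact that relations of bounded length eventually stabilize under convergence in $\mathcal{M}_r$.

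Putting the pieces together, assume Conjecture \ref{AbelMeetEuclidInfinite}. Given any of the conditions $(1)$--$(4)$, the trivial implications force condition $(4)$, which by Proposition \ref{Reduction} yields $(5)$, which in turn by Proposition \ref{AbelImpliesEuclid} yields $(1)$. This closes the cycle $(1) \Rightarrow (2), (3) \Rightarrow (4) \Rightarrow (5) \Rightarrow (1)$ and establishes the equivalence of all five conditions, which is precisely Conjecture \ref{AbelMeetEuclidFinite}. There is no genuine obstacle here: the work has already been done in the two preceding propositions, and the corollary is a purely combinatorial bookkeeping observation about the order in which to chain them.
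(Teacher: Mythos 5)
Your proposal is correct and matches the paper's argument exactly: the paper introduces the corollary with ``Combining Proposition \ref{AbelImpliesEuclid} and Proposition \ref{Reduction}, we get:'' and leaves the bookkeeping implicit, while you have simply spelled out the cycle $(1) \Rightarrow (2),(3) \Rightarrow (4) \Rightarrow (5) \Rightarrow (1)$ with the trivial quantifier weakenings filling the unconditional steps. Nothing to add.
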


It is natural to ask the following $L_2$-variant of Ostrovkii's Question \ref{OstrovskiiQuestion}:
Does there exist $r \in \n$ and generating sets $T_n \subset Sym_n$ each of size $|T_n|\leq r$ such that 
$\sup_n c_2(\Gamma(Sym_n,d_{T_n})) < \infty$?
Assuming Conjecture \ref{AbelMeetEuclidFinite} the answer is no.
The same negative answer follows from the following conjecture:

\begin{conj}[Naor]\label{conjectureForSymmetricGroups}
    For any $r \in \n$ and $M>0$ there exists $n \in \n$ such that 
    for any generating set $T_n$ of $Sym_n$ of size $|T_n|\leq r$ we have the following super-diffusive drift estimate for the simple random walk $\{W_t\}_{t=1}^\infty$ on $\Gamma(Sym_n,T_n)$ starting at $W_0 = 1$:
    $$\sup_{t \in \n} \dfrac{\e \;d_{T_n}(1,W_t)^2}{t} > M.$$
\end{conj}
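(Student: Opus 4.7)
The conjecture predicts a uniformly super-diffusive rate of escape on every bounded-size Cayley graph of $Sym_n$, and the key mechanism that would convert such an estimate into the promised negative $L_2$-answer is the Markov-type-$2$ property of Hilbert space of Naor-Peres-Schramm-Sheffield: any reversible Markov chain on a metric space bi-Lipschitzly embedded into $L_2$ with distortion $D$ satisfies $\e\,d(W_0,W_t)^2 \leq D^2 t$ for every $t$. Consequently, super-diffusion $\sup_t \e\,d_{T_n}(1,W_t)^2/t > M$ forces $c_2(Sym_n,d_{T_n}) > \sqrt{M}$, so proving the conjecture in its quantitative form would exactly give $\sup_n \inf_{T_n} c_2(Sym_n, d_{T_n}) = \infty$, yielding the $L_2$-analogue of Question \ref{OstrovskiiQuestion} negatively.

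My plan to establish super-diffusion directly is a two-step attack. Step one produces a lower bound on the displacement at stationarity: by the volume bound $|B_{T_n}(R)|\leq(2r+1)^R$ and $|Sym_n|=n!$, the diameter is at least $cn\log n/\log r$ uniformly in $T_n$, and a routine anti-concentration argument upgrades this to $\e_{\pi\sim\mathrm{Unif}(Sym_n)}\,d_{T_n}(1,\pi)^2 \gtrsim n^2\log^2 n/\log^2 r$. Hence at any time $t\geq \tau_{\mathrm{mix}}(T_n)$ the drift of the walk satisfies $\e\,d_{T_n}(1,W_t)^2 \gtrsim n^2\log^2 n/\log^2 r$. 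Step two is to pick $t=\tau_{\mathrm{mix}}(T_n)$ and use an upper bound $\tau_{\mathrm{mix}}(T_n)=o(n^2\log^2 n)$ valid for every generating set of size at most $r$, which would close the argument.

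The central obstacle is Step two: a uniform polynomial (let alone sub-$n^2\log^2 n$) mixing bound for arbitrary bounded generating sets of $Sym_n$ is essentially the content of Babai's polynomial-diameter conjecture, and is open in full generality. To cope with this I would split into structural cases on the natural action of $T_n$ on $[n]$. If $T_n$ has a short orbit, restrict to the Young subgroup stabilizer and induct on smaller symmetric groups. If the induced action is transitive, combine Schreier-graph spectral information with Diaconis-Saloff-Coste comparison techniques to bound the group-level spectral gap; Kassabov-type expander generators and the generators $\{t,c\}$ of Theorem \ref{MainTheorem} both lie in regimes where this is already tractable, so the attack narrows to transitive generating sets with polylogarithmic spectral gap.

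For the most exotic transitive generating sets, a genuinely new idea seems required. A softer fallback, which would not prove the full conjecture but would still settle the $L_2$-version of Ostrovskii's question, is to exhibit a bi-Lipschitz copy of a known non-Hilbertian test space (a Hamming cube, or a sequence of quotient tori $\ell_1^r/\Lambda$ as in Proposition \ref{PropositionL_1Quotients}) inside $(Sym_n, d_{T_n})$ for every $T_n$, mirroring for $L_2$ what Theorem \ref{MainTheorem} together with Subsection \ref{SubSectionRibe} achieves for $L_1$. I expect the hardest part to be ruling out a pathological transitive generating set whose Cayley graph has exponentially slow mixing yet no identifiable non-Euclidean obstruction, in which case producing super-diffusion may require a novel spectral decomposition of the random walk along the irreducible representations of $Sym_n$, selecting one whose Markov operator is forced to have eigenvalue close to $1$ by a pigeonhole argument depending only on $r$.
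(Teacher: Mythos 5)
This statement is labeled as a conjecture in the paper and the paper does not prove it; after stating it the paper only \emph{verifies} it for three specific families of generating sets (the cycle-and-transposition pair via Yadin's drift exponent $3/4$ for the lampshuffler group, Kassabov's expander generators via Kesten's linear-drift estimate, and random generating sets via Dixon's theorem together with the Friedman--Joux--Roichman--Stern--Tillich Schreier-graph expansion, or alternatively via Dixon--Pyber--Seress--Shalev word maps and high girth). So there is no ``paper's own proof'' to compare against, and your proposal likewise is not a proof but a research plan. Your opening observation -- that Markov type $2$ of Hilbert space converts super-diffusion into $c_2(Sym_n,d_{T_n})>\sqrt{M}$ and hence would negatively answer the $L_2$-analogue of Question~\ref{OstrovskiiQuestion} -- is exactly the remark the paper makes immediately after the conjecture, so that framing is correct and matches the paper.

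As for the attack itself: Step one (the uniform diameter lower bound $\gtrsim n\log n/\log r$ from the volume-growth estimate, upgraded to a stationary second-moment lower bound of order $n^2\log^2 n/\log^2 r$) is fine. Step two, as you yourself flag, is the genuine gap and is the entire difficulty of the conjecture: a uniform $o(n^2\log^2 n)$ mixing-time bound for \emph{every} bounded-size generating set of $Sym_n$ is well beyond what is known -- even Babai's diameter conjecture for $Sym_n$ remains open (the best known bound, due to Helfgott--Seress, is quasipolynomial), and polynomial diameter would not by itself yield the needed mixing-time control. Your proposed case analysis (short orbits $\Rightarrow$ Young-subgroup induction; transitive case via Diaconis--Saloff-Coste comparison and Schreier spectral data) is a reasonable outline but does not close the transitive exotic case, and you correctly identify that something new is required there. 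The softer fallback of embedding a known non-Hilbertian test space uniformly into every $(Sym_n,d_{T_n})$ would indeed suffice for the $L_2$-Ostrovskii question, but it is a strictly weaker target than the conjecture, and the paper does not pursue it either. In short: no gap in what you wrote that you haven't already acknowledged, but there is nothing in the paper you are missing -- the statement is open, and the paper's contribution on this point is the three special-case verifications, which your proposal does not attempt and which rely on tools (Yadin's lampshuffler drift, Kesten's estimate, Dixon/Friedman et al.) orthogonal to your two-step scheme.
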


Potentially, stronger super-diffusivity estimates might hold.
Using Markov type $2$ as in Linial-Magen-Naor \cite{linial2002girth} (alternatively see Chapter 8 in \cite{ostrovskii2013metric} or Chapter 13 in \cite{lyons2017probability})
Conjecture \ref{conjectureForSymmetricGroups} implies that for any $r \in \n$ and any sequence of generating sets $\{T_n \subset Sym_n\}$ of size $|T_n|\leq r$ we have
$\sup_n c_2(Sym_n,d_{T_n}) = \infty$.

Next, we verify Conjecture \ref{conjectureForSymmetricGroups} for specific generating sets of the symmetric groups.

Let $T_n = \{(0123...n-1),(01)\}$ is the cycle and the transposition.
Then one checks that the ball of radius $n/4$ of $Sym_n$ is graph-isomorphic to the ball of radius $n/4$ of the $1D$-lampshuffler group $Sym_{oo}(\z) \ltimes \z$ with standard generators \cite{yadin2009rate}. 
(In other words, we have local convergence to the $1D$-lampshuffler group.)
By a theorem of Yadin \cite{yadin2009rate}, the simple random walk on $Sym_{oo}(\z) \ltimes \z$ has \textit{drift exponent} $3/4$.
This means that for small times the same estimate holds on the symmetric group:
$$\e d(1,W_t) \gtrsim t^{3/4}\;\;\;for\;all\;1 \leq t \leq n/4.$$
This shows Conjecture \ref{conjectureForSymmetricGroups} for $T_n = \{(0123...n-1), (01)\}$.

In what follows, we need to recall the following \textbf{Basic Fact}:
The random walk $\{W_t\}_{t=1}^\infty$ on an $n$-vertex degree $d$ graph $\Gamma$ with spectral gap $1-\lambda$ satisfies:
$$\e d(W_0,W_t) \gtrsim_{d,\lambda} t\;\;\;for\;all\;0\leq t \lesssim_{d,\lambda} \log n,$$
where $d(\cdot,\cdot)$ is the shortest path metric on $\Gamma$, and we have suppressed the dependence on the degree $d$ and the spectral gap $1-\lambda$.
(This estimate goes back to Theorem 5 in Kesten \cite{kesten1959symmetric}; see also Proposition 6.9 in \cite{lyons2017probability}.).

Let $T_n$ be the generating set from Kassabov's theorem.
The Cayley graphs are expanders, so Conjecture \ref{conjectureForSymmetricGroups} is verified by the above fact.

Finally, we show that uniformly random generators satisfy Conjecture \ref{conjectureForSymmetricGroups}.
Recall the theorem of Dixon \cite{dixon1969probability}: an iid sample of uniformly random permutations $\pi_1,... \pi_r$ ($r\geq 2$) generate the subgroup $Alt_n$ (the alternating group) or the entire symmetric group $Sym_n$ with probability $> 1 - O(1/\log \log n) = 1- o(1)$.
In particular, the probability that the generated subgroup is $Alt_n$ equals $1/2^r + o(1)$.

\begin{prop}[Naor]
    For each $r\geq 2$ let $\pi_1,...,\pi_r \in Sym_n$ be iid sampled permutations from the uniform distribution. Then with high probability 
    $$\e d_{\{\pi_1,...,\pi_r\}}(1,W_t) \gtrsim t\;\;\;for\;all\;\;t=1,...,O(\log n),$$
    where $\{W_t\}_{t=1}^\infty$ is the simple random walk on $<\pi_1,...,\pi_r>$.
\end{prop}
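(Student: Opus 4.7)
The plan is to show that with high probability over $\pi_1,\dots,\pi_r$, the Cayley graph $\Gamma(Sym_n,\{\pi_i\})$ is locally isomorphic to the $2r$-regular tree on a ball of radius $\Omega(\log n)$, and then transfer the well-known positive speed of simple random walk on a regular tree of degree at least $4$ to the symmetric group.

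\textbf{Step 1 (short words rarely evaluate to the identity).} Fix a non-trivial reduced word $w = s_{i_1}^{\epsilon_1}\cdots s_{i_\ell}^{\epsilon_\ell} \in \mathbb{F}_r$ of length $\ell \leq n/2$. For each $p \in [n]$, trace the orbit $p_0=p,p_1,\dots,p_\ell$ with $p_k = \pi_{i_k}^{\epsilon_k}(p_{k-1})$, revealing a coordinate of some $\pi_j$ only when first queried (principle of deferred decisions). Each fresh query samples uniformly from $[n]$ minus at most $\ell$ previously revealed values, and a short case analysis using reducedness shows the probability the orbit closes at $p$ is $\leq 2\ell/n$. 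Summing over $p$ gives $\e[\#\mathrm{fix}(w(\pi))] \leq 2\ell$, so by Markov $\p(w(\pi)=1)=\p(\#\mathrm{fix}(w(\pi))=n)\leq 2\ell/n$. Since $\mathbb{F}_r$ has at most $(2r)^L$ reduced words of length $\leq L$, choosing $L=\lfloor c\log n\rfloor$ with $c\log(2r)<1/2$ and applying a union bound yields that with probability $1-o(1)$ no non-trivial reduced word of length $\leq L$ maps to the identity in $Sym_n$.

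\textbf{Step 2 (local tree isomorphism and drift transfer).} On this event, the evaluation map $\mathbb{F}_r\to Sym_n$, $u\mapsto u(\pi)$, is injective on the ball $B_{L/3}(1)\subset\mathbb{F}_r$ (any two distinct elements of that ball would differ by a non-trivial reduced word of length $\leq L$), and a similar length-accounting argument shows it maps isomorphically as edge-labeled Cayley graphs onto the $(L/3)$-ball of $\Gamma(Sym_n,\{\pi_i\})$. A uniformly random permutation is an involution with vanishing probability, and the $\pi_i$'s are pairwise unequal up to inversion with high probability, so the Cayley graph of $\mathbb{F}_r$ with these generators is the $2r$-regular tree, and hence the $(L/3)$-ball in $\Gamma(Sym_n,\{\pi_i\})$ is tree-like. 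Because $d(1,W_t)\leq t$, for $t\leq L/3-1$ the walk remains in this region and its law coincides with that of simple random walk on the $2r$-regular tree, which for $r\geq 2$ has linear speed $\e\,d(1,W_t)\geq \tfrac{r-1}{r}t$ by the classical computation recalled in the excerpt (Kesten). This yields $\e\,d_{\{\pi_1,\dots,\pi_r\}}(1,W_t)\gtrsim t$ for all $t=1,\dots,\Omega(\log n)$ as required.

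\textbf{Main obstacle.} The technical crux is the deferred-decisions estimate in Step 1: one must handle orbits that revisit previously revealed pairs $(i_k,p_{k-1})$, since at such steps the next position is forced by the partial permutation already exposed rather than sampled uniformly. Reducedness of $w$ rules out the trivial backtracking $p_{k+1}=p_{k-1}$, but more general collisions require care; nevertheless a standard bookkeeping argument shows the total probability of closure at $p$ is $O(\ell/n)$ uniformly in $w$. Once this is in hand, the union bound, the local tree isomorphism, and the transfer of drift from the tree to the Cayley graph are routine.
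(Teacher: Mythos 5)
Your proposal follows essentially the same route as the paper's \emph{second} proof: show that $\Gamma(Sym_n,\{\pi_i\})$ has girth $\Omega(\log n)$ with high probability, so that the $\Omega(\log n)$-ball is isomorphic to a ball in the $2r$-regular tree, and then transfer the linear speed of simple random walk on that tree. (The paper's \emph{first} proof is genuinely different: it invokes Dixon's theorem, passes to the Schreier graph of the action on $4$-element subsets, and cites Friedman--Joux--Roichman--Stern--Tillich for expansion, then applies the Kesten-type drift bound for expanders.) Where the paper's second proof cites Dixon--Pyber--Seress--Shalev to get $\p[w(\pi)=1]\to 0$ and then union-bounds, you attempt a self-contained Broder--Shamir deferred-decisions estimate. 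This is a legitimate and arguably cleaner plan --- it would in fact yield an explicit girth bound $\gtrsim\log n$, whereas the paper's second proof as written only yields girth $\gtrsim\log(1/p_n)$ with no stated rate for $p_n$.

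However, the crux of your Step~1 is not actually carried out, and this is a real gap. The assertion that ``a short case analysis using reducedness shows the probability the orbit closes at $p$ is $\leq 2\ell/n$'' is precisely the nontrivial part of the Broder--Shamir argument: one must classify partial orbits by the number of coincidences (steps that are forced by previously exposed coordinates, or fresh queries that land on previously visited states), bound each class separately, and check that the multi-coincidence contributions (of order roughly $(\ell^2/n)^2$) are dominated by the single-coincidence contribution $O(\ell/n)$. That domination holds for $\ell\lesssim\log n$, which is all you need, but it fails for $\ell$ comparable to $n^{1/3}$, so the bound cannot be asserted uniformly for $\ell\leq n/2$ as you state. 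Reducedness only forbids the one-step backtrack $p_{k+1}=p_{k-1}$; it does not rule out longer-range collisions, and you flag exactly this in the ``main obstacle'' paragraph. Until that bookkeeping is done (or the Dixon--Pyber--Seress--Shalev / Nica / Linial--Puder literature is invoked in its place), Step~1 is incomplete. The remainder --- Markov's inequality, the union bound over the $(2r)^L$ reduced words, the local tree isomorphism on the ball of radius a constant fraction of the girth, and the linear speed of SRW on a tree of degree $\geq 4$ --- is routine and correct. (Minor: the linear speed on the $2r$-regular tree is an elementary computation, not really the content of the paper's ``Basic Fact'' from Kesten, which is the expander drift bound used in the paper's first proof.)
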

\begin{proof}[First proof]
    Look at the action of the group $<\pi_1,...,\pi_r> \curvearrowright  \binom{[n]}{4}$ on the collection of all $4$-element subsets on $n$.
    Build the Schreier graph $\Gamma_n(4)$ with vertices $\binom{[n]}{4}$, where we connect $(A, \pi_i[A])$ for each generator $\pi_i$ and subset $A \in \binom{[n]}{4}$.
    \\By a theorem of Friedman-Joux-Roichman-Stern-Tillich \cite{friedman1998action}, 
    $\Gamma_n(4)$ is an expander with high probability.
    Denote by $W_t$ the simple random walk on $Sym_n$ (or $Alt_n$)
    and by $W_t(\{1,2,3,4\})$ the image of the set $\{1,2,3,4\}$ under the permutation $W_t$, and by $d_{\Gamma_n(4)}$ the shortest path metric on the expander graph.
    By the basic fact above we get:
    $$\e d_{\Gamma_n(4)}(\{1,2,3,4\},W_t(\{1,2,3,4\})) \gtrsim t\;\;\;for\;all\;\;t=1,...,O(\log n).$$
    \\The shortest path metric on the Schreier graph is at most by the shortest path metric on the Cayley graph (as the latter covers the former).
    This means that
    $$\e \;d_{\{\pi_1,...,\pi_r\}}(1,W_t) \gtrsim t\;\;\;for\;all\;\;t=1,...,O(\log n).$$
\end{proof}

\begin{proof}[Second proof]
    By a theorem of Dixon-Pyber-Seress-Shalev\cite{dixon2003residual} for any nonempty freely reduced word $w$ in the generators and inverses $\{\pi_1,...,\pi_r, \pi_1^{-1},..., \pi_r^{-1}\}$, 
    $p_n := \p[w = 1] \to 0$ as $n \to \infty$.
    By the union bound on all words of length $< \log_{2r} 1/p_n - 1$, we conclude that the
    $$\e \;girth(\Gamma(Sym_n, \{\pi_1,...,\pi_r\})) \gtrsim \log \dfrac{1}{p_n} \to \infty\;\;\;as\;n \to \infty.$$
    For small times, the simple random walk on a degree $2r$ regular graph of high girth is identical to that of a $2r$-regular tree, therefore $\e d_{\{\pi_1,...,\pi_r\}}(1,W_t) \gtrsim t$ for times $1\leq t \leq girth(\Gamma)/2$.
\end{proof}


\section{\textbf{Proof of test space characterization of type}}\label{SectionProofOfCorollaries}

As was already discussed in subsection \ref{SubSectionRibe},
if $X$ has trivial Rademacher type, then $L_1$ is finitely representable into $X$.
By Theorem \ref{MainTheorem}, for each $n \in \n$, $\Gamma(Sym_n,\{c,t\})$ embeds into $l_1^m$ with distortion $<1000$ for sufficiently high $m$,
hence $\Gamma(Sym_n,\{c,t\})$ embeds into $X$ with distortion $<1000$.
For the other direction, it suffices to show the following lemma.
The embedding is similar to that of Arzhantseva, Guba and Sapir \cite{arzhantseva2006metrics} for the lamplighter $\z/n \wr \z/n$.

\begin{lemm}
    For each $n \in \n$, the Hamming cube $(\{0,1\}^n,||\cdot||_1)$ 
    \\embeds into $\Gamma(Sym_{4n^2},\{c,t\})$ with bi-Lipschitz distortion $\leq 30$.
\end{lemm}
\begin{proof}
For each $n\in \n$ consider the following $n$ disjoint transpositions in $Sym_{4n^2}$
$$t_1:=(0,n), t_2:=(1\;n+1), t_3:=(2\;n+2), ... , t_n:=(n-1\;2n-1).$$
Map each $(\epsilon_1,...,\epsilon_n) \in \{0,1\}^n$ to the permutation
$f_\epsilon := t_1^{\epsilon_1}...t_n^{\epsilon_n}$.
\\Fix distinct $\epsilon = (\epsilon_1,...,\epsilon_n), \delta = (\delta_1,...,\delta_n) \in \{0,1\}^n$.
By the word metric formula in Section \ref{SectionYadinFormula}, we have the upper bound:
$$d(f_\epsilon,f_\delta) 
\leq \min_{l \in \z/4n^2}\left(6\sum_{k \in \z/4n^2} d_{\z/4n^2}(f_\epsilon(k) - l,f_\delta(k)) + 2diam_{\z/n^2}(\{0,l\}\cup\{p:f_\epsilon^{-1}(p) \neq f_{\delta}^{-1}(p-l)\})\right) $$
$$\leq 6\sum_{k \in \z/4n^2} d_{\z/4n^2}(f_\epsilon(k),f_\delta(k)) + 4n = 6n ||\epsilon-\delta||_1 + 4n \leq 10n ||\epsilon-\delta||_1,$$
and the lower bound:
$$d(f_\epsilon,f_\delta) 
\geq \dfrac{1}{3} \min_{l \in \z/4n^2}\sum_{k \in \z/4n^2} d_{\z/4n^2}(f_\epsilon(k) - l,f_\delta(k))$$
$$ = \dfrac{1}{3} \min_{l \in \z/4n^2}\left((4n^2-2n) d_{\z/4n^2}(0,l) + \sum_{i:\epsilon_i = \delta_i} d_{\z/4n^2}(0,l) + \sum_{\epsilon_i = 0,\delta_i = 1} d_{\z/4n^2}(n,l) +  \sum_{\epsilon_i = 1,\delta_i = 0} d_{\z/4n^2}(-n,l)\right).$$
By inspection, it is clear that the minimizer occurs at $l=0$ so we get
$$ \dfrac{1}{3}n ||\epsilon - \delta||_1 \leq d(f_\epsilon,f_\delta) \leq 10n ||\epsilon-\delta||_1.$$
\end{proof}


\section*{\textbf{Acknowledgments}}
I thank Assaf Naor for his results which have been included in Section \ref{EuclideanSection} of this paper, for encouragement, insightful discussions, writing advice, tons of context around this work, and for helping me with the use of representation theory for the embedding in Section \ref{SectionFirstTermEmbedding}.
I thank Mikhail Ostrovskii for telling me his Question \ref{OstrovskiiQuestion}, and for insightful discussions and context.
I thank Emmanuel Breuillard for showing me step 3 in Proposition \ref{Reduction}.
I also thank Kunal Chawla, Seung-Yeon Ryoo, and Eduardo Silva for insightful discussions and context.
Finally, I thank the Casa Matemática Oaxaca (CMO) and Banff International Research Station for Mathematical Innovation and Discovery (BIRS), as well as the American Institute of Mathematics (AIM) for giving me the opportunity to speak with Mikhail Ostrovskii and learn about his question.

\newpage
\bibliographystyle{plain}
\bibliography{references}

\end{document}